\newtheorem{theorem}{Theorem}[section]
\newtheorem{corollary}[theorem]{Corollary}
\newtheorem{lemma}[theorem]{Lemma}
\newtheorem{problem}[theorem]{Problem}
\newtheorem{proposition}[theorem]{Proposition}
\newtheorem{remark}[theorem]{Remark}
\begin{document}

\title[A generalized \v{S}emrl's theorem for weak-2-local derivations]{On a generalized \v{S}emrl's theorem for weak-2-local derivations on $B(H)$}

\date{\today}

\author[J.C. Cabello]{Juan Carlos Cabello}

\author[A.M. Peralta]{Antonio M. Peralta}

\address{Departamento de An{\'a}lisis Matem{\'a}tico, Universidad de Granada,
Facultad de Ciencias 18071, Granada, Spain}
\email{jcabello@ugr.es, aperalta@ugr.es}

\thanks{Authors partially supported by the Spanish Ministry of Economy and Competitiveness and European Regional Development Fund project no. MTM2014-58984-P and Junta de Andaluc\'{\i}a grants FQM375 and FQM290.}

\keywords{derivation; 2-local linear map; 2-local symmetric maps; 2-local $^*$-derivation; 2-local derivation; weak-2-local derivation}

\subjclass[2010]{47B49, 46L05, 46L40, 46T20, 47L99}


\begin{abstract} We prove that, for every complex Hilbert space $H$, every weak-2-local derivation on $B(H)$ or on $K(H)$ is a linear derivation. We also establish that every weak-2-local derivation on an atomic von Neumann algebra or on a compact C$^*$-algebra is a linear derivation.
\end{abstract}

\maketitle

\section{Introduction}

Let $\mathcal{S}$ be a subset of the space $L(X,Y)$ of all linear
maps between Banach spaces $X$ and $Y$. Following \cite{AyuKuday2012,AyuKuday2014} and \cite{CaPe2015}, we shall say that a (non-necessarily linear nor continuous) mapping $\Delta : X\to Y$ is a \emph{weak-2-local $\mathcal{S}$ map} (respectively, a \emph{2-local $\mathcal{S}$-map}) if for each $x,y\in X$ and $\phi\in Y^{*}$ (respectively, for each $x,y\in X$), there exists $T_{x,y,\phi}\in \mathcal{S}$, depending on $x$, $y$ and $\phi$ (respectively, $T_{x,y}\in \mathcal{S}$, depending on $x$ and $y$), satisfying $$\phi \Delta(x) = \phi T_{x,y,\phi}(x), \hbox{ and  }\phi \Delta(y) = \phi T_{x,y,\phi}(y)$$ (respectively, $ \Delta(x) =  T_{x,y}(x),$ and $  \Delta(y) = T_{x,y}(y)$).\smallskip

When $A$ is a Banach algebra and $\mathcal{S}$ is the set of derivations (respectively, homomorphisms or automorphisms) on $A$, weak-2-local $\mathcal{S}$ maps on $A$ are called \emph{weak-2-local derivations} (respectively, \emph{weak-2-local homomorphisms} or \emph{weak-2-local automorphisms}). \emph{2-local $^*$-derivations} and \emph{2-local $^*$-homomorphisms} on C$^*$-algebras are similarly defined. We recall that a $^*$-derivation on a C$^*$-algebra $A$ is a derivation $D: A\to A$ satisfying $D(a^*) = D(a)^*$ ($a\in A$).\smallskip

The notion of 2-local derivations goes back, formally, to 1997 when P. \v{S}emrl introduces the formal definition and proves that, for every infinite dimensional separable Hilbert space $H$, every 2-local automorphism (respectively, every 2-local derivation) on $B(H)$ is an automorphism (respectively, a derivation). Sh. Ayupov and K. Kudaybergenov proved that \v{S}emrl's theorem also holds for arbitrary Hilbert spaces \cite{AyuKuday2012}. In 2014, Ayupov and Kudaybergenov prove that every 2-local derivation on a von Neumann algebra is a derivation (see \cite{AyuKuday2014}).\smallskip

Results on weak-2-local maps are even more recent. In a very recent contribution, M. Niazi and the second author of this note prove the following generalization of the previously mentioned results.

\begin{theorem}\label{t NiPe Thm 3.10}{\rm\cite[Theorem 3.10]{NiPe2015}} Let $H$ be a separable complex Hilbert space. Then
every {\rm(}non-necessarily linear nor continuous{\rm)} weak-2-local $^*$-derivation on $B(H)$ is linear and a $^*$-derivation.$\hfill\Box$
\end{theorem}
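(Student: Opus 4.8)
The plan is to recover a single skew-adjoint operator implementing $\Delta$, after first disposing of the formal consequences of the definition. Writing $\Delta$ for the weak-2-local $^*$-derivation, put $y=x$ in the definition: for every $x$ and every $\phi\in B(H)^{*}$ there is a $^*$-derivation $T=T_{x,\phi}$ with $\phi\Delta(x)=\phi T(x)$. Evaluating at $x=0$ and using $T(0)=0$ gives $\Delta(0)=0$, while the pair $(\lambda x,x)$ gives $\phi\Delta(\lambda x)=\lambda\,\phi T(x)=\lambda\,\phi\Delta(x)$ for all $\phi$, so $\Delta(\lambda x)=\lambda\Delta(x)$. Moreover every $^*$-derivation preserves self-adjointness, so if $a=a^{*}$ and $\phi$ is a state then $\phi\Delta(a)=\phi T(a)\in\mathbb{R}$; as states separate points, $\Delta(a)$ is self-adjoint, i.e. $\Delta$ maps $B(H)_{sa}$ into itself. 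I shall use repeatedly that, because all derivations of $B(H)$ are inner, every $^*$-derivation has the explicit form $\operatorname{ad}_{ih}(x)=i(hx-xh)$ with $h=h^{*}$.

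\emph{The core: reconstructing the implementing element.} Following the pattern of \v{S}emrl's argument, I would fix a countable orthonormal basis $(e_{n})$ of the separable space $H$ and the associated matrix units $e_{ij}$, and attempt to build a single operator $a\in B(H)$ with $\phi\Delta(x)=\phi\,\operatorname{ad}_{a}(x)$ for every $x$ and every normal $\phi$. The entries of a hypothetical $a$ are forced by the values of $\Delta$ on the $e_{ij}$: evaluating $\phi\Delta(e_{ij})=\phi\,[a,e_{ij}]$ against the matrix-unit functionals $\omega_{k,l}(\cdot)=\langle\,\cdot\,e_{l},e_{k}\rangle$ reads off the entries of $a$ up to an additive scalar, which is harmless since it does not affect $\operatorname{ad}_{a}$; indeed $[a,e_{ij}]$ has completely explicit entries, supported on the $i$-th row and $j$-th column, and testing against the remaining $\omega_{k,l}$ already forces $\Delta(e_{ij})$ to have that same support. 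The weak-2-local condition, applied to the pairs formed by $e_{ij}$ and a comparison operator (a diagonal operator or a second matrix unit), is what makes a chosen functional see the \emph{same} derivation on both slots and thus forces the entries extracted from different pairs to be mutually consistent. The main obstacle, which I expect to absorb most of the work, is precisely this global consistency: the hypothesis only yields agreement after pairing with a functional, so one must show that the entries read off from different pairs assemble into a single bounded operator $a$, and that the resulting $\operatorname{ad}_{a}$ agrees with $\Delta$ not merely on the building blocks but, after a density and weak-$^{*}$ limiting argument that genuinely uses the separability of $H$, on every $x\in B(H)$.

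\emph{Conclusion.} Once such an $a$ is produced, $\phi\Delta(x)=\phi\,[a,x]$ for all $x$ and all $\phi$ in a separating family forces $\Delta=\operatorname{ad}_{a}$; in particular $\Delta$ is linear and a derivation, so additivity requires no separate treatment. Finally, the self-adjoint-preserving property from the first step shows that $a+a^{*}$ is central, so replacing $a$ by its skew-adjoint part (which does not change $\operatorname{ad}_{a}$) exhibits $\Delta$ as $\operatorname{ad}_{ih}$ with $h=h^{*}$. Hence $\Delta$ is a linear $^*$-derivation, as claimed.
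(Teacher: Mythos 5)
This statement is not proved in the paper you were given: it is quoted from \cite[Theorem 3.10]{NiPe2015} (the $\Box$ marks it as an external citation), and the proof there is a long, technical argument. Your proposal, by contrast, is a strategy outline rather than a proof. The preliminary observations are fine: $\Delta(0)=0$, $1$-homogeneity, preservation of self-adjointness via states, and the reduction (by Sakai's theorem) to showing $\Delta=\operatorname{ad}_a$ for a single $a$. The entry-reading computation is also correct: testing $\Delta(e_{ij})$ against the functionals $\omega_{k,l}$ does force the support of $\Delta(e_{ij})$ and does determine candidate entries for $a$. But the part you yourself flag as ``the main obstacle \ldots which I expect to absorb most of the work'' is precisely the theorem, and it is left undone. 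Two concrete gaps: (i) nothing in your argument shows that the entries read off from different pairs and different functionals are mutually consistent \emph{and} assemble into a \emph{bounded} operator --- the weak-2-local hypothesis gives, for each pair and each $\phi$, a different implementing element $h_{x,y,\phi}$, with no a priori uniform bound; establishing such a boundedness principle is exactly what occupies all of Section 2 of the present paper (Theorem \ref{thm boundedness of ztildeF}) in the non-symmetric setting, and it is equally the analytic core of the proof in \cite{NiPe2015}.

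(ii) The final step, ``a density and weak-$^*$ limiting argument'' extending the identity $\phi\Delta(x)=\phi[a,x]$ from matrix units to all of $B(H)$, cannot be carried out as stated, because $\Delta$ is neither linear nor continuous in any topology: you cannot push limits through $\Delta$. The way this is actually done (both in \cite{NiPe2015} and in Theorem \ref{t id princ} of the present paper) is via identities that hold \emph{exactly}, not in the limit, such as
\begin{equation*}
p\,\Delta(x)\,p = p\,\Delta\bigl(pxp + pxp^{\perp} + p^{\perp}xp\bigr)\,p
\end{equation*}
for finite-rank projections $p$ (\cite[Lemma 3.2]{NiPe2015}), which transfer information about $\Delta$ on finite-rank elements to compressions $p\,\Delta(x)\,p$ of arbitrary $x$; only the compressions, which do not involve $\Delta$ of a limit, are then passed to a strong$^*$/weak$^*$ limit. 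Without a tool of this kind your limiting step has no justification. So the proposal correctly identifies the \v{S}emrl-style skeleton but omits the two ingredients --- a boundedness principle and an identity principle replacing continuity --- that constitute the actual mathematical content of the theorem.
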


The same authors prove that for finite dimensional C$^*$-algebras the conclusions are stronger:

\begin{theorem}\label{t NiPe Cor 2.13}{\rm\cite[Corollary 2.13]{NiPe2015}} Every weak-2-local derivation on a finite dimensional C$^*$-algebra is a linear derivation.$\hfill\Box$
\end{theorem}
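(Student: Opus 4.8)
The plan is to reduce to a single matrix algebra, dispatch the easy structural properties, and then concentrate all the effort on additivity. Since a finite-dimensional C$^*$-algebra is a finite sum $A=\bigoplus_{i=1}^{m}M_{k_i}(\mathbb{C})$ and every derivation of $A$ kills the central projections $1_{k_i}$, each derivation of $A$ leaves every factor $M_{k_i}(\mathbb{C})$ invariant and restricts there to an inner derivation. I would exploit this to localize $\Delta$. For fixed $x$ the set $C_x=\{T(x):T\in\mathrm{Der}(A)\}$ is a (finite-dimensional, hence closed) linear subspace of $A$; applying the weak-2-local hypothesis to the pair $(x,x)$ and invoking Hahn--Banach forces $\Delta(x)\in C_x$, since any functional $\phi$ annihilating $C_x$ would satisfy $\phi\Delta(x)=\phi T(x)=0$. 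Thus $\Delta(x)=D_x(x)$ for a derivation $D_x$. Taking $x$ inside one factor shows $C_x\subseteq M_{k_i}(\mathbb{C})$, so $\Delta$ maps each factor into itself, and restricting functionals exhibits $\Delta|_{M_{k_i}(\mathbb{C})}$ as a weak-2-local derivation. It therefore suffices to treat $A=M_n(\mathbb{C})$.

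The elementary properties come for free. Applying the hypothesis to $(\lambda x,x)$ gives a derivation $T$ with $\phi\Delta(\lambda x)=\lambda\phi T(x)=\lambda\phi\Delta(x)$ for every $\phi$, so $\Delta(\lambda x)=\lambda\Delta(x)$ for all $\lambda\in\mathbb{C}$; similarly $\Delta(0)=0$. With the localization step this yields $\Delta(x)=[a_x,x]$ for some $a_x\in M_n(\mathbb{C})$ depending on $x$, i.e.\ $\Delta$ is a pointwise inner derivation. I would also record the decisive flexibility that the class of weak-2-local derivations is closed under subtracting a genuine derivation $D$ (if $T$ serves $\Delta$ for $(x,y,\phi)$ then $T-D$ serves $\Delta-D$), which permits \v{S}emrl-type normalizations.

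The main obstacle is additivity, $\Delta(x+y)=\Delta(x)+\Delta(y)$. Representing $\phi\in M_n(\mathbb{C})^{*}$ by a matrix $p$ through $\phi(\cdot)=\mathrm{tr}(p\,\cdot)$, the hypothesis for $(x,y,\phi)$ says that $(\phi\Delta(x),\phi\Delta(y))$ lies in the range of the linear map
\[
a\longmapsto\bigl(\mathrm{tr}(a[x,p]),\,\mathrm{tr}(a[y,p])\bigr).
\]
This range is a proper subspace of $\mathbb{C}^{2}$, and hence produces a genuine linear relation between $\phi\Delta(x)$ and $\phi\Delta(y)$, precisely when some nonzero combination $\alpha x+\beta y$ commutes with $p$. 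The strategy is to feed in functionals whose matrix $p$ is, say, a spectral projection of $x$, of $y$, or of $x+y$, so as to harvest enough relations to force $\phi\Delta(x+y)=\phi\Delta(x)+\phi\Delta(y)$ for every $\phi$. I expect this to be the hard part: the weakness of the hypothesis (control through one functional at a time) must be defeated by the rigidity of these commutation/trace constraints, and the bookkeeping seems to require a case analysis over the joint spectral structure of $x$ and $y$, combined with a \v{S}emrl-type reduction that first normalizes $\Delta$ to vanish on the diagonal subalgebra and on a system of matrix units and then propagates the vanishing.

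Once $\Delta$ is known to be linear, the pointwise identity $\Delta(x)=D_x(x)$ makes it a linear local derivation on $M_n(\mathbb{C})$, and a linear local derivation on a matrix algebra is automatically a derivation; reassembling the factors shows $\Delta$ is a derivation on $A$. All the difficulty is thus concentrated in the additivity step.
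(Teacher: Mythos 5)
First, a point of reference: the paper you are working against does not prove this statement at all. It is imported verbatim as \cite[Corollary 2.13]{NiPe2015}, where it follows from the theorem that every weak-2-local derivation on $M_n(\mathbb{C})$ is a linear derivation; the entire technical content lives in that reference (and in the companion paper on weak-2-local derivations on $\mathbb{M}_n$), whose proof of additivity is a long matrix-unit and spectral analysis. So your proposal must be judged as a standalone proof, and there it has a genuine gap. Your reductions are all correct and match the known structure: the Hahn--Banach argument showing $\Delta(x)\in C_x=\{T(x):T\in\mathrm{Der}(A)\}$, the consequent factor-invariance and restriction to a single $M_n(\mathbb{C})$, the $1$-homogeneity from the pair $(\lambda x, x)$, pointwise innerness, the stability of the weak-2-local property under subtracting a genuine derivation, and the final appeal to Kadison's local-derivation theorem once linearity is in hand. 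The trace identity $\phi T(x)=\mathrm{tr}(a[x,p])$ for $T=[a,\cdot]$, $\phi=\mathrm{tr}(p\,\cdot)$, and the observation that the constraint map $a\mapsto(\mathrm{tr}(a[x,p]),\mathrm{tr}(a[y,p]))$ has proper range exactly when some $\alpha x+\beta y$ commutes with $p$, are also correct and are indeed the right mechanism.

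But the proof stops precisely where the theorem begins. Additivity, $\Delta(x+y)=\Delta(x)+\Delta(y)$, is not established; you describe a strategy (feed in spectral projections of $x$, $y$, $x+y$; harvest linear relations; normalize $\Delta$ to vanish on a diagonal and a system of matrix units; propagate) and explicitly defer the case analysis. This deferred case analysis \emph{is} the cited result: the weak-2-local hypothesis only controls one functional at a time, and the passage from the commutation/trace constraints to full additivity is exactly what occupies the bulk of the reference's argument (first additivity on self-adjoint or rank-one pieces, then matrix-unit bookkeeping, with genuine obstructions when $x$, $y$, and $x+y$ have degenerate joint spectral structure, e.g.\ when the relevant commutators vanish simultaneously and the range degenerates to dimension $0$ rather than $1$). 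As written, nothing in your text rules out a nonadditive $\Delta$ satisfying all the constraints you have listed; every step you actually carry out (homogeneity, factor reduction, pointwise innerness) is satisfied by plenty of nonlinear maps, for instance $\Delta(x)=[a_x,x]$ with $a_x$ chosen wildly as a function of $x$. Until the spectral-projection case analysis is executed in full, this is a correct plan with the theorem's core left as a placeholder.
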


Let $\Delta :A \rightarrow B$ be a mapping between C$^*$-algebras. We consider a new mapping $\Delta^{\sharp}: A \rightarrow B$ given by $\Delta^{\sharp} (x):= \Delta (x^*)^*$ ($x \in A$).  Obviously, $\Delta^{\sharp\sharp}=\Delta$, $\Delta(A_{sa}) \subseteq B_{sa}$ for every $\Delta$ satisfying  $\Delta =\Delta^\sharp$, where $A_{sa}$ and $B_{sa}$ denote the self-adjoint parts of $A$ and $B$, respectively. The mapping $\Delta$ is linear if and only if $\Delta^\sharp$ enjoys the same property. The mapping $\Delta$ is called \emph{symmetric} if $\Delta^{\sharp} = \Delta$ (equivalently, $\Delta (x^*)= \Delta(x)^*$, for all $x\in X$). Henceforth, the set of all symmetric maps from $A$ into $B$ will be denoted by $\mathcal{S}(A,B)$. Weak-2-local $\mathcal{S}(A,B)$ maps between $A$ and $B$ will be called \emph{weak-2-local symmetric maps}, while 2-local $L(A,B)$ maps between $A$ and $B$ will be called \emph{weak-2-local linear maps}.\smallskip

The study on weak-2-local maps has been also pursued in \cite{CaPe2015}, where we obtained that every weak-2-local symmetric map between C$^*$-algebras is linear (see \cite[Theorem 2.5]{CaPe2015}). Among the consequences of this result, we also establish that every weak-2-local $^*$-derivation on a general C$^*$-algebra is a (linear) $^*$-derivation (cf. \cite[Corollary 2.10]{CaPe2015}).\smallskip

One of the main problems that remains unsolved in this line reads as follows:

\begin{problem}\label{problem 1 new}
Is every weak-2-local derivation on a general C$^*$-algebra $A$ a derivation? \end{problem}

We shall justify later that every weak-2-local derivation $\Delta$ on $A$ writes as a linear combination $\Delta =\Delta_1 +\Delta_2$, where $\Delta_1=\frac{\Delta + \Delta^\sharp}{2}$ and $\Delta_2=\frac{\Delta - \Delta^\sharp}{2 i}$ are weak-2-local derivations and symmetric maps. Thus, we shall deduce that the above Problem \ref{problem 1 new} is equivalent to the following question.

\begin{problem}\label{problem 2 new}
Let $\Delta: A\to A$ be a weak-2-local derivation on a C$^*$-algebra which is also a symmetric map {\rm(}i.e. $\Delta^{\sharp} = \Delta${\rm)}. Is $\Delta$ a weak-2-local symmetric map? -- or, equivalently, Is $\Delta$ a linear derivation?
\end{problem}

The above problems are natural questions arisen in an attempt to generalize the above mentioned results by \v{S}emrl's \cite{Semrl97} and Ayupov and Kudaybergenov \cite{AyuKuday2012, AyuKuday2014}. Both remain open even in the intriguing case of $A= B(H)$.\smallskip

In this paper we provide a complete positive answer to both problems in several cases. In Theorem \ref{t final} we prove that every weak-2-local derivation on $A=B(H)$ is a linear derivation. This generalizes the results in  \cite{Semrl97}, \cite{AyuKuday2012, AyuKuday2014} and \cite{NiPe2015}. We also establish that this \emph{weak-2-local stability} of derivations is also true when $A$ coincides with $K(H)$ (see Theorem \ref{t final KH}), when $A$ is an atomic von Neuman algebra (cf. Corollary \ref{t final atomic von Neumann}), and when $A$ is a compact C$^*$-algebra (cf. Corollary \ref{t final compact Cstar}). The techniques and arguments provided in this note are completely new compared with those in previous references. The note is divided in two main sections. In Section 2 we establish a certain boundedness principle showing that for each weak-2-local derivation $\Delta$ on $B(H)$, or on $K(H)$, the mappings $a\mapsto p_{_F} \Delta (p_{_F} a p_{_F}) p_{_F}$ are uniformly bounded when $p_{_F}$ runs in the set of all finite-rank projections on $H$ (compare Theorems \ref{thm boundedness of ztildeF} and \ref{thm boundedness of ztildeF compact}). In Section 3 we derive the main results of the paper from an identity principle, which assures that a weak-2-local derivation $\Delta$ on $B(H)$ with $\Delta^{\sharp} = \Delta$, coincide with a $^*$-derivation $D$ if and only if they coincide on every finite-rank projection in $B(H)$ (see Theorem \ref{t id princ}).

\section{Boundedness of weak-2-local derivations on the lattice of projections in $B(H)$}

We recall some basic properties on weak-2-local maps which have been borrowed from \cite{CaPe2015} and \cite{NiPe2014}.

\begin{lemma}\label{l new basic properties 1}{\rm(\cite[Lemma 2.1]{CaPe2015}, \cite[Lemma 2.1]{NiPe2014})} Let $X$ and $Y$ be Banach spaces and let $\mathcal{S}$ be a subset of the space $L(X,Y)$. Then the following properties hold:\begin{enumerate}[$(a)$] \item Every weak-2-local $\mathcal{S}$ map $\Delta: X\to Y$ is 1-homogeneous, that is, $\Delta (\lambda x) =  \lambda \Delta(x)$, for every $x\in X$, $\lambda\in \mathbb{C}$;
\item  Suppose there exists $C> 0$ such that every linear map $T\in \mathcal{S}$ is continuous with $\|T\|\leq C$. Then every weak-2-local $\mathcal{S}$ map $\Delta: X\to Y$ is $C$-Lipschitzian, that is, $\|\Delta(x)-\Delta (y) \|\leq C \|x-y\|$, for every $x,y\in X$;
\item If $\mathcal{S}$ is a (real) linear subspace of $L(X,Y)$, then every (real) linear combination of weak-2-local $\mathcal{S}$ maps is a weak-2-local $\mathcal{S}$ map;
\item Suppose $A$ and $B$ are C$^*$-algebras and $\mathcal{S}$ is a real linear subspace of $L(A,B)$. If a mapping $\Delta :A \rightarrow B$ is a weak-2-local $\mathcal{S}$ map then for each $\varphi\in B_{sa}^*$ and every $x,y\in A$, there exists $T_{x,y,\varphi}\in \mathcal{S}$ satisfying $\varphi \Delta(x) = \varphi T_{x,y,\varphi}(x)$ and $\varphi \Delta(y) = \varphi T_{x,y,\varphi}(y)$.
\item Suppose $A$ and $B$ are C$^*$-algebras and $\mathcal{S}$ is a real linear subspace of $L(A,B)$ with $\mathcal{S}^{\sharp} = \mathcal{S}$ (in particular when $\mathcal{S} = \mathcal{S}(A,B)$ is the set of all symmetric linear maps from $A$ into $B$). Then a mapping $\Delta :A \rightarrow B$ is a weak-2-local $\mathcal{S}$ map if and only if $\Delta^\sharp$ is a weak-2-local $\mathcal{S}$ map. $\hfill\Box$
\end{enumerate}
\end{lemma}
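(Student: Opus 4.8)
The plan is to verify the five items directly from the defining property, the crucial structural feature being that the witnessing operator $T_{x,y,\phi}$ is allowed to depend on the functional $\phi$. For $(a)$ I fix $x\in X$, $\lambda\in\mathbb C$ and $\phi\in Y^*$ and apply the definition to the pair $(x,\lambda x)$: the resulting $T\in\mathcal S$ satisfies $\phi\Delta(\lambda x)=\phi T(\lambda x)=\lambda\,\phi T(x)=\lambda\,\phi\Delta(x)$ by linearity of $T$, so $\phi(\Delta(\lambda x)-\lambda\Delta(x))=0$ for every $\phi\in Y^*$, and the Hahn--Banach theorem forces $\Delta(\lambda x)=\lambda\Delta(x)$ (the choice $\lambda=0$ also yields $\Delta(0)=0$). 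For $(b)$ I argue through functionals in the unit ball of $Y^*$: for such a $\phi$ a witness $T$ gives $|\phi(\Delta(x)-\Delta(y))|=|\phi T(x-y)|\le\|T\|\,\|x-y\|\le C\|x-y\|$, and taking the supremum over $\|\phi\|\le1$ produces the $C$-Lipschitz estimate.

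The linearity hypothesis on $\mathcal S$ enters in $(c)$, and here the $\phi$-dependence of the witness is decisive. Given weak-2-local $\mathcal S$ maps $\Delta_1,\dots,\Delta_n$ and real scalars $\alpha_1,\dots,\alpha_n$, put $\Delta=\sum_i\alpha_i\Delta_i$; fixing $x,y\in X$ and a \emph{single} $\phi\in Y^*$, I choose $T_i\in\mathcal S$ realizing the two identities for each $\Delta_i$ against this common $\phi$, and then $T=\sum_i\alpha_i T_i$ lies in $\mathcal S$ (a real linear subspace) and satisfies $\phi\Delta(x)=\phi T(x)$ and $\phi\Delta(y)=\phi T(y)$. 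This is precisely the step that would collapse for \emph{2-local} maps, where the witness cannot be adapted to $\phi$. Item $(d)$ is then immediate, since $B_{sa}^*\subseteq B^*$: one simply specializes the defining property to a Hermitian functional $\varphi$.

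The only genuinely delicate point is $(e)$, which I would handle by passing to conjugate data. Writing $T^\sharp(x)=T(x^*)^*$ for $T\in L(A,B)$ and $\phi^\sharp(b)=\overline{\phi(b^*)}$ for $\phi\in B^*$ (which is again a bounded linear functional), assume $\Delta$ is weak-2-local $\mathcal S$; since $\Delta^{\sharp\sharp}=\Delta$ it suffices to treat $\Delta^\sharp$. Fixing $x,y\in A$ and $\phi\in B^*$, I apply the weak-2-local property of $\Delta$ to the pair $(x^*,y^*)$ and the functional $\phi^\sharp$, obtaining $S\in\mathcal S$ with $\phi^\sharp\Delta(x^*)=\phi^\sharp S(x^*)$ and $\phi^\sharp\Delta(y^*)=\phi^\sharp S(y^*)$; conjugating and invoking the identities $\overline{\phi^\sharp(c)}=\phi(c^*)$ and $\phi(S^\sharp(x))=\overline{\phi^\sharp(S(x^*))}$ converts these into $\phi\Delta^\sharp(x)=\phi S^\sharp(x)$ and $\phi\Delta^\sharp(y)=\phi S^\sharp(y)$, while the hypothesis $\mathcal S^\sharp=\mathcal S$ guarantees $S^\sharp\in\mathcal S$. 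Thus $T=S^\sharp$ witnesses the weak-2-local property for $\Delta^\sharp$, and the converse follows by symmetry. I expect the main source of error to be the bookkeeping of the two involutions $\phi\mapsto\phi^\sharp$ and $S\mapsto S^\sharp$, so I would record the two conjugation identities explicitly before assembling the conclusion.
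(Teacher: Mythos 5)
Your proof is correct, and it is essentially the argument of the sources the paper relies on: the paper itself gives no proof of Lemma~\ref{l new basic properties 1} (it is imported from \cite[Lemma 2.1]{CaPe2015} and \cite[Lemma 2.1]{NiPe2014}, hence the $\Box$ closing the statement), and your direct verification --- Hahn--Banach for $(a)$ and $(b)$, witnesses chosen against a \emph{common} functional $\phi$ for $(c)$, specialization to Hermitian functionals under the identification $B_{sa}^*\cong (B^*)_{sa}$ for $(d)$, and the bookkeeping of the two involutions $\phi\mapsto\phi^\sharp$, $T\mapsto T^\sharp$ for $(e)$ --- is exactly the standard one. You also correctly isolated where the hypotheses genuinely enter: the (real) linearity of $\mathcal{S}$ only in $(c)$, and only the closure $\mathcal{S}^\sharp=\mathcal{S}$ (not linearity) in $(e)$.
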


Henceforth, $H$ will denote an arbitrary complex Hilbert space. The symbols $B(H)$ and $K(H)$ will denote the C$^*$-algebras of all bounded and compact linear operators on $H$, respectively. If $H$ is finite dimensional, then every weak-2-local derivation on $B(H)$ is a linear derivation (compare Theorem \ref{t NiPe Cor 2.13}). We may therefore assume that $H$ is infinite dimensional.\smallskip

Following standard notation, an element $x$ in a C$^*$-algebra $A$ is said to be \emph{finite} (respectively, \emph{compact}) in $A$, if the wedge operator
$x\wedge x:A \to A$, given by $x\wedge x (a)=xax$, is a finite-rank (respectively, compact) operator on $A$. It is known that the ideal $\mathcal{F} (A)$ of finite elements in $A$ coincides with $\hbox{Soc} (A)$, the \emph{socle} of $A$, that is, the sum of all minimal right (equivalently left) ideals of $A$, and that
$\mathcal K (A)=\overline{\hbox{Soc}(A)}$ is the ideal of compact
elements in $A$. Moreover, if $H$
is a Hilbert space, then $\mathcal F (\mathcal L (H))=\mathcal F
(H)$ and $\mathcal K (\mathcal L (H)) =\mathcal K (H)$ are the
ideals of finite-rank and compact elements in $B(H)$, respectively.\smallskip

Suppose $\Delta : B(H) \to B(H)$ is a weak-2-local derivation. By \cite[Lemma 3.4]{NiPe2015} we know that $\Delta (K(H))\subseteq K(H)$ and $\Delta|_{K(H)} : K(H)\to K(H)$ is a weak-2-local derivation. Proposition 3.1 in \cite{NiPe2015} proves that $\Delta (a+ b) = \Delta (a) +\Delta (b)$, for every $a,b\in \mathcal{F} (H)$.

\begin{lemma}\label{lema NIPe finite ranks}{\rm\cite[Lemma 3.4 and Proposition 3.1]{NiPe2015}} Let $\Delta : B(H) \to B(H)$ be a weak-2-local derivation. Then $\Delta|_{\mathcal{F} (H)} : \mathcal{F} (H) \to B(H)$ is linear. $\hfill\Box$
\end{lemma}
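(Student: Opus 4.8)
The plan is to derive linearity from two facts that are already at our disposal: the additivity of $\Delta$ on finite-rank operators and the complex homogeneity of $\Delta$. First I would record that $\mathcal{F}(H)$ is a complex linear subspace of $B(H)$---the sum of two finite-rank operators is finite-rank, and scalar multiples of finite-rank operators remain finite-rank---so that the assertion that $\Delta|_{\mathcal{F}(H)}$ is linear is meaningful and reduces to checking additivity together with $\mathbb{C}$-homogeneity on $\mathcal{F}(H)$.

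For additivity I would simply invoke Proposition 3.1 of \cite{NiPe2015}, cited above, which gives $\Delta(a+b) = \Delta(a) + \Delta(b)$ for all $a, b \in \mathcal{F}(H)$. For homogeneity I would appeal to Lemma \ref{l new basic properties 1}$(a)$, by which every weak-2-local $\mathcal{S}$ map is $1$-homogeneous; in particular $\Delta(\lambda a) = \lambda \Delta(a)$ for every $\lambda \in \mathbb{C}$ and every $a \in \mathcal{F}(H)$.

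The final assembly is routine. Given $a, b \in \mathcal{F}(H)$ and $\lambda, \mu \in \mathbb{C}$, the operators $\lambda a$ and $\mu b$ again lie in $\mathcal{F}(H)$, so additivity yields $\Delta(\lambda a + \mu b) = \Delta(\lambda a) + \Delta(\mu b)$, and homogeneity then gives $\lambda \Delta(a) + \mu \Delta(b)$, which is precisely the $\mathbb{C}$-linearity of $\Delta|_{\mathcal{F}(H)}$.

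The one point to keep in view---rather than a genuine obstacle---is that Proposition 3.1 delivers additivity only on $\mathcal{F}(H)$ and not on all of $B(H)$, so the argument must remain within the finite-rank subspace, where closure under the relevant algebraic operations is available. The substantive difficulty has been absorbed into the cited additivity result of \cite{NiPe2015}; the present lemma is an assembly of that statement with the general homogeneity property of weak-2-local maps recorded in Lemma \ref{l new basic properties 1}.
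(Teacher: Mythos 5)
Your proof is correct and takes essentially the same route as the paper: the lemma is stated there as an immediate consequence of \cite[Proposition 3.1]{NiPe2015}, which gives additivity on $\mathcal{F}(H)$ (as spelled out in the paragraph preceding the lemma), combined implicitly with the $1$-homogeneity of weak-2-local maps from Lemma \ref{l new basic properties 1}$(a)$. Your explicit assembly of these two facts into $\mathbb{C}$-linearity on the subspace $\mathcal{F}(H)$ is precisely the intended argument.
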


Let us revisit some basic facts on commutators. We recall that every derivation on a C$^*$-algebra is continuous (cf. \cite[Lemma 4.1.3]{S}). A celebrated result of S. Sakai establishes that every derivation on a von Neumann algebra $M$ is inner, that is, if $D: M\to M$ is a derivation then there exists $z\in M$ such that $D(x) = [z,x] = z x - x z$ for every $x\in M$ (see \cite[Theorem 4.1.6]{S}). The element $z$ given by Sakai's theorem is not unique; however, we can choose $z$ satisfying $\|z\| \leq \|D\|$. \smallskip

Let us consider two elements $z,w$ in a C$^*$-algebra $A$ such that the derivations $[z,.]$ and $[w,.]$ coincide as linear maps on $A$. Since $[z,x]= [w,x]$ for every $x\in A$, we deduce that $z-w$ lies in the center of $A$. The reciprocal statement is also true, therefore $[z,.]=[w,.]$ on $A$ if and only if $z-w$ lies in the center, $Z(A)$, of $A$. It is known that a derivation of the form $[z,.]$ is symmetric (i.e. a $^*$-derivation) if and only if $z= w + c$, where $w=-w^*$ and $c$ lies in the center of $A$.\smallskip

From now on, the set of all finite dimensional subspaces of $H$ will be denoted by $ \mathfrak{F} (H).$ We consider in $\mathfrak{F} (H)$ the natural order given by inclusion. For each $F\in \mathfrak{F} (H)$, $p_{_F}$ will denote the orthogonal projection of $H$ onto $F$.

\begin{lemma}\label{l zF} Let $\Delta : B(H) \to B(H)$ be a weak-2-local derivation. For each $F\in \mathfrak{F} (H)$ there exists $z_{_F}\in p_{_F} B(H) p_{_F}$ satisfying $$p_{_F} \Delta (p_{_F} a p_{_F}) p_{_F} = [z_{_F}, p_{_F} a p_{_F}], $$ for every $a\in B(H)$. If $\Delta$ is symmetric {\rm(}i.e. $\Delta^{\sharp} = \Delta${\rm)}, then we can choose $z_{_F}\in p_{_F} B(H) p_{_F}$ satisfying $z_{_F}= - z_{_F}^*.$
\end{lemma}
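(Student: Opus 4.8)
The plan is to reduce everything to the finite-dimensional result in Theorem~\ref{t NiPe Cor 2.13}. Fix $F \in \mathfrak{F}(H)$ and write $p = p_{_F}$ for brevity. The corner $p B(H) p$ is a unital C$^*$-subalgebra of $B(H)$ isomorphic to the matrix algebra $M_n(\mathbb{C})$ with $n = \dim F$, and its unit is $p$. Since $p a p$ runs over all of $p B(H) p$ as $a$ runs over $B(H)$, it suffices to produce $z_{_F} \in p B(H) p$ with $p \Delta(x) p = [z_{_F}, x]$ for every $x \in p B(H) p$. To this end I would consider the compression
\[
\Delta_{_F} : p B(H) p \to p B(H) p, \qquad \Delta_{_F}(x) := p\, \Delta(x)\, p,
\]
and show that $\Delta_{_F}$ is a weak-2-local derivation on the finite-dimensional C$^*$-algebra $p B(H) p$.

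The heart of the argument is verifying this weak-2-local condition, and it is where the only genuine subtlety lies. Fix $x, y \in p B(H) p$ and $\phi \in (p B(H) p)^*$. The natural move is to transport $\phi$ up to $B(H)$ by setting $\psi(b) := \phi(p\,b\,p)$ for $b \in B(H)$, which defines a bounded functional $\psi \in B(H)^*$. Applying the weak-2-local derivation property of $\Delta$ on $B(H)$ to the triple $(x, y, \psi)$ yields a derivation $D$ on $B(H)$ with $\psi \Delta(x) = \psi D(x)$ and $\psi \Delta(y) = \psi D(y)$. By Sakai's theorem $D = [w, \cdot]$ for some $w \in B(H)$, and a direct computation using $p x = x = x p$ gives the compression identity $p\,[w,x]\,p = [\,p w p, x\,]$ for every $x \in p B(H) p$. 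Hence, writing $w_{_F} := p w p \in p B(H) p$, the map $T := [w_{_F}, \cdot]$ is an (inner) derivation of $p B(H) p$, and
\[
\phi\,\Delta_{_F}(x) = \phi\big(p\,\Delta(x)\,p\big) = \psi \Delta(x) = \psi D(x) = \phi\big(p\,[w,x]\,p\big) = \phi\, T(x),
\]
with the same chain valid for $y$. Thus $T$ witnesses the weak-2-local condition for $(x, y, \phi)$, so $\Delta_{_F}$ is a weak-2-local derivation.

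With this in hand the conclusion follows quickly. By Theorem~\ref{t NiPe Cor 2.13}, $\Delta_{_F}$ is a linear derivation on $p B(H) p$, and since every derivation of the full matrix algebra $p B(H) p$ is inner, there exists $z_{_F} \in p B(H) p$ with $\Delta_{_F}(x) = [z_{_F}, x]$ for all $x \in p B(H) p$; taking $x = p a p$ gives the desired identity. For the symmetric case, if $\Delta^\sharp = \Delta$ then $\Delta_{_F}(x^*) = p\,\Delta(x)^*\,p = \Delta_{_F}(x)^*$, so $\Delta_{_F}$ is a $^*$-derivation. As recalled before the statement, this forces $z_{_F} + z_{_F}^*$ to lie in the center of $p B(H) p$, which is $\mathbb{C} p$; being self-adjoint it equals $\lambda p$ for some $\lambda \in \mathbb{R}$. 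Replacing $z_{_F}$ by $z_{_F} - \tfrac{\lambda}{2} p$ leaves the derivation $[z_{_F}, \cdot]$ unchanged (the subtracted term is central) and produces a representative with $z_{_F} = -z_{_F}^*$, completing the plan. The only step requiring care is the transfer of the weak-2-local condition through $\psi$ together with the compression identity $p[w,x]p = [pwp, x]$; everything else is an invocation of the finite-dimensional theorem and standard facts about inner $^*$-derivations.
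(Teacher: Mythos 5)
Your proof is correct and follows essentially the same route as the paper: compress $\Delta$ to the finite-dimensional corner $p_{_F}B(H)p_{_F}$, check that the compression is a weak-2-local derivation, invoke Theorem~\ref{t NiPe Cor 2.13} and the innerness of derivations on matrix algebras, and skew-symmetrize in the symmetric case. The only difference is that you verify the weak-2-locality of the compression directly (via the transported functional $\psi$ and the identity $p[w,x]p=[pwp,x]$), where the paper simply cites \cite[Proposition 2.7]{NiPe2014}; your verification is sound.
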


\begin{proof} Let $F$ be a finite dimensional subspace of $H$. By \cite[Proposition 2.7]{NiPe2014} the mapping $p_{_F} \Delta p_{_F}|_{p_{_F} B(H) p_{_F} } : p_{_F} B(H) p_{_F} \to p_{_F} B(H) p_{_F}$, $a\mapsto p_{_F} \Delta (p_{_F}  a p_{_F} ) p_{_F}$ is a weak-2-local derivation. Having in mind that $p_{_F} B(H) p_{_F} $ is a finite dimensional C$^*$-algebra, we deduce from Theorem \ref{t NiPe Cor 2.13} that $p_{_F} \Delta p_{_F}|_{p_{_F} B(H) p_{_F} }$ is a linear derivation. By Sakai's theorem there exists $z_{_F}\in {p_{_F} B(H) p_{_F} }$ satisfying the desired conclusion.\smallskip

If $\Delta$ is symmetric we can easily check that $p_{_F} \Delta p_{_F}|_{p_{_F} B(H) p_{_F} }$ also is symmetric, and hence a $^*$-derivation on ${p_{_F} B(H) p_{_F} }$. In this case, we can obviously replace $z_{_F}$ with $\frac{z_{_F}-z_{_F}^*}{2}$ to get the final statement in the lemma.
\end{proof}

\begin{remark}\label{remark [zF]}{\rm Let $\Delta : B(H) \to B(H)$ be a weak-2-local derivation with $\Delta^{\sharp} = \Delta$, and let  $F$ be a subspace in $\mathfrak{F} (H)$. It is clear that the element $z_{_F}$ given by the above Lemma \ref{l zF} is not unique. We can consider the set $$[z_{_F}]:=\Big\{ z\in p_{_F} B(H) p_{_F} : z^*=-z \hbox{ and } p_{_F}\Delta p_{_F}|_{p_{_F} B(H) p_{_F} } = [z,. ]\Big\}.$$ Given $z_1,z_2\in [z_{_F}]$ it follows that $z_1 -z_2 \in Z(p_{_F} B(H) p_{_F}) = \mathbb{C} p_{_F}$, and since $(z_1-z_2)^* = -(z_1-z_2)$, it follows that there exists $\lambda\in \mathbb{R}$ such that $z_2 = z_1 + i \lambda p_{_F}$.  It is easy to check that there exists a unique $\widetilde{z}_{_F}\in [z_{_F}]$ satisfying $$\|\widetilde{z}_{_F}\| = \min \{ \|z\| : z\in [{z}_{_F}]\}.$$
}\end{remark}

From now on, given an element $a$ in a C$^*$-algebra $A$, the spectrum of $a$ will be denoted by $\sigma(a)$.
Our next remark gathers some information about the norm of an inner $^*$-derivation on $B(H)$.

\begin{remark}\label{remark Stamfli and norms of derivations}{\rm Let $z$ be an element in $B(H)$. J.G. Stampfli proves in \cite[Theorem 4]{Stampfli} that $$\Big\|[z,.] \Big\| = \inf_{\lambda\in \mathbb{C}} \Big\|z - \lambda Id_{H} \Big\|,$$ where $\|[z,.] \|$ denotes the norm of the inner derivation $[z,.]$ in $B(B(H))$.\smallskip

For a compact subset $K\subset \mathbb{C}$, the radius, $\rho(K)$, of $K$ is the radius of the smallest disk containing $K$. In general, two times the radius of a compact set $K$ does not coincide with its diameter. In general, $2 \rho(K)\geq \hbox{diam} (K)$. However, when $K\subset \mathbb{R}$ or $K\subset i \mathbb{R}$, we can easily see that $2 \rho(K)= \hbox{diam} (K)$.\smallskip

When $z$ is a normal operator in $B(H)$ we further know that $\|[z,.] \| = 2 \rho (\sigma(z))$ (compare \cite[Corollary 1]{Stampfli}). In particular, for each $z$ in $B(H)$ with $z=z^*$ or $z=-z^*$, we have \begin{equation}\label{eq norm [z,.] for skew z} \|[z,.] \| = 2 \rho (\sigma(z)) = \hbox{diam} (\sigma(z))\leq 2 \|z\|.
\end{equation} Let us observe that if $0\in\sigma(z)$, then $\|z\| \leq $diam$(\sigma(z))$, for every $z=\pm z^*$.
}\end{remark}

Given a projection $p$ in a unital C$^*$-algebra $A$ we shall denote by $p^{\perp}$ the projection $1-p$.

\begin{lemma}\label{l F1 in F2} Let $\Delta : B(H) \to B(H)$ be a weak-2-local derivation with $\Delta^{\sharp} = \Delta$. Suppose $F_1$ and $F_2$ are finite dimensional subspaces of $H$ with $F_1\subseteq F_2$. We employ the notation given in Remark \ref{remark [zF]}. Then for each $z_1\in \left[z_{_{F_1}}\right]$ and each $z_2\in \left[z_{_{F_2}}\right]$ we have $$[z_1,.] =[p_{_{F_1}} z_2 p_{_{F_1}},.]$$ as operators on $p_{_{F_1}} B(H) p_{_{F_1}}$. Consequently, there exists a real $\lambda$ {\rm(}depending on $z_1$ and $z_2${\rm)} such that $z_1 +i \lambda  p_{_{F_1}} = p_{_{F_1}} z_2 p_{_{F_1}}$. In particular, diam$(\sigma(z_1))\leq $diam$(\sigma(z_2))$ and diam$(\sigma(z_1))= $diam$(\sigma({z'}_1))$ for every $z_1,z'_1\in [z_{_{F_1}}]$.
\end{lemma}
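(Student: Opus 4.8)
The plan is to transfer the local derivation identities at the levels $F_1$ and $F_2$ onto the common corner $p_{_{F_1}} B(H) p_{_{F_1}}$ and then invoke the fact that this corner is a full matrix algebra with center $\mathbb{C} p_{_{F_1}}$. First I would fix $a\in B(H)$ and set $x:= p_{_{F_1}} a p_{_{F_1}}\in p_{_{F_1}} B(H) p_{_{F_1}}$. Since $F_1\subseteq F_2$ gives $p_{_{F_1}} = p_{_{F_2}} p_{_{F_1}} = p_{_{F_1}} p_{_{F_2}}$, the element $x$ also satisfies $x = p_{_{F_2}} x p_{_{F_2}}$, so applying Lemma~\ref{l zF} at both levels yields $[z_1,x] = p_{_{F_1}}\Delta(x) p_{_{F_1}}$ and $[z_2,x] = p_{_{F_2}}\Delta(x) p_{_{F_2}}$.

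Next I would compress the second identity by $p_{_{F_1}}$ on both sides. Using $x = p_{_{F_1}} x = x p_{_{F_1}}$, a direct computation gives $p_{_{F_1}}[z_2,x]p_{_{F_1}} = [p_{_{F_1}} z_2 p_{_{F_1}}, x]$, while the right-hand side compresses to $p_{_{F_1}}(p_{_{F_2}}\Delta(x) p_{_{F_2}})p_{_{F_1}} = p_{_{F_1}}\Delta(x) p_{_{F_1}} = [z_1,x]$. Letting $a$ (hence $x$) run over $B(H)$, and recalling that $x$ then ranges over all of $p_{_{F_1}} B(H) p_{_{F_1}}$, this proves the first assertion $[z_1,.] = [p_{_{F_1}} z_2 p_{_{F_1}},.]$ on $p_{_{F_1}} B(H) p_{_{F_1}}$. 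The coincidence of these inner derivations forces $z_1 - p_{_{F_1}} z_2 p_{_{F_1}}$ into the center $Z(p_{_{F_1}} B(H) p_{_{F_1}}) = \mathbb{C} p_{_{F_1}}$. Since $z_1^* = -z_1$ and $(p_{_{F_1}} z_2 p_{_{F_1}})^* = p_{_{F_1}} z_2^* p_{_{F_1}} = -p_{_{F_1}} z_2 p_{_{F_1}}$ (using $z_2^*=-z_2$), this central scalar must be purely imaginary, which gives the desired $z_1 + i\lambda p_{_{F_1}} = p_{_{F_1}} z_2 p_{_{F_1}}$ for some real $\lambda$.

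For the spectral estimates, the relation $p_{_{F_1}} z_2 p_{_{F_1}} = z_1 + i\lambda p_{_{F_1}}$ shows $\sigma(p_{_{F_1}} z_2 p_{_{F_1}}) = \sigma(z_1) + i\lambda$, so these two sets share the same diameter. To compare with $\sigma(z_2)$, I would write $z_2 = i h_2$ with $h_2 = h_2^*$; then $p_{_{F_1}} z_2 p_{_{F_1}} = i\, (p_{_{F_1}} h_2 p_{_{F_1}})$ is $i$ times the compression of a self-adjoint operator to $F_1$. For every unit vector $\xi\in F_1$ one has $\langle p_{_{F_1}} h_2 p_{_{F_1}} \xi,\xi\rangle = \langle h_2 \xi,\xi\rangle \in [\min\sigma(h_2),\max\sigma(h_2)]$, so the eigenvalues of the compression lie in this interval and $\hbox{diam}(\sigma(p_{_{F_1}} h_2 p_{_{F_1}}))\leq \hbox{diam}(\sigma(h_2))$. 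Multiplication by $i$ is isometric on $\mathbb{C}$, hence $\hbox{diam}(\sigma(z_1)) = \hbox{diam}(\sigma(p_{_{F_1}} z_2 p_{_{F_1}}))\leq \hbox{diam}(\sigma(z_2))$. The final equality $\hbox{diam}(\sigma(z_1)) = \hbox{diam}(\sigma({z'}_1))$ is immediate from Remark~\ref{remark [zF]}, since any two elements of $[z_{_{F_1}}]$ differ by $i\mu p_{_{F_1}}$ with $\mu\in\mathbb{R}$.

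I expect the one genuinely non-bookkeeping step to be the spectral diameter inequality under compression: for a general operator, compression can enlarge the spectrum, so the argument relies essentially on the self-adjoint (equivalently skew-adjoint, normal) structure, through the fact that the numerical range of a self-adjoint operator is exactly the convex hull of its spectrum and always contains the eigenvalues of any of its compressions. Everything else is routine manipulation of the projections $p_{_{F_1}}, p_{_{F_2}}$ together with the center computation already recorded in Remark~\ref{remark [zF]}.
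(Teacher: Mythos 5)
Your proof is correct. The first two thirds --- compressing the identity from Lemma \ref{l zF} at level $F_2$ by $p_{_{F_1}}$, using $x=p_{_{F_1}}xp_{_{F_1}}$ to get $p_{_{F_1}}[z_2,x]p_{_{F_1}}=[p_{_{F_1}}z_2p_{_{F_1}},x]$, and then invoking $Z(p_{_{F_1}}B(H)p_{_{F_1}})=\mathbb{C}p_{_{F_1}}$ together with skew-adjointness to produce the purely imaginary central scalar --- is exactly the paper's argument. Where you genuinely diverge is the diameter inequality: the paper deduces it from Stampfli's formula (its Remark \ref{remark Stamfli and norms of derivations}), writing $\hbox{diam}(\sigma(z_1))=\left\|[z_1,.]\right\|=\left\|p_{_{F_1}}[z_2,p_{_{F_1}}\,.\,p_{_{F_1}}]p_{_{F_1}}\right\|\leq\left\|[z_2,.]\right\|=\hbox{diam}(\sigma(z_2))$, i.e.\ it compares norms of inner derivations, whereas you argue spectrally: writing $z_2=ih_2$ with $h_2$ self-adjoint, the eigenvalues of the compression $p_{_{F_1}}h_2p_{_{F_1}}$ are Rayleigh quotients $\langle h_2\xi,\xi\rangle$ and hence lie in $[\min\sigma(h_2),\max\sigma(h_2)]$, so compression cannot increase the spectral diameter. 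Both are valid; your route is more elementary and self-contained (it needs nothing beyond the min-max/numerical-range property of self-adjoint matrices, and you correctly flag that normality is essential since compression can enlarge spectra in general), while the paper's route has the advantage of reusing the identification $\hbox{diam}(\sigma(\widetilde{z}_{_F}))=\left\| p_{_F}\Delta p_{_F}|_{p_{_F}B(H)p_{_F}}\right\|$, which is the form in which the lemma is actually exploited later (e.g.\ in Corollary \ref{c id principle 3} and Theorem \ref{thm boundedness of ztildeF}). You also spell out the final claim that $\hbox{diam}(\sigma(z_1))=\hbox{diam}(\sigma(z'_1))$ via the $i\mu p_{_{F_1}}$ translation, which the paper leaves implicit.
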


\begin{proof} By Lemma \ref{l zF} and Remark \ref{remark [zF]} we have $$p_{_{F_1}}\Delta  ({p_{_{F_1}} a p_{_{F_1}} }) p_{_{F_1}}= [z_1, p_{_{F_1}} a p_{_{F_1}} ],$$ and $$p_{_{F_2}}\Delta ({p_{_{F_2}} a p_{_{F_2}} } )p_{_{F_2}}= [z_2, p_{_{F_2}} a p_{_{F_2}} ],$$ for every $a\in B(H)$. Since $p_{_{F_1}} \leq p_{_{F_2}}$, it follows that $$[p_{_{F_1}} z_2 p_{_{F_1}}, p_{_{F_1}} a p_{_{F_1}} ]  = p_{_{F_1}}  [z_2, p_{_{F_1}} a p_{_{F_1}} ] p_{_{F_1}} = p_{_{F_1}} p_{_{F_2}}\Delta ({p_{_{F_1}} a p_{_{F_1}} } )p_{_{F_2}} p_{_{F_1}}$$ $$=p_{_{F_1}}\Delta  ({p_{_{F_1}} a p_{_{F_1}} }) p_{_{F_1}}= [z_1, p_{_{F_1}} a p_{_{F_1}} ]$$ for every $a\in B(H)$, which proves the first statement in the lemma.\smallskip

Since $Z(p_{_{F_1}} B(H) p_{_{F_1}}) =\mathbb{C} p_{_{F_1}}$, $z_1^*=-z_1$ and $z_2^*=-z_2$, there exists $\lambda\in \mathbb{R}$ such that $z_1 +i \lambda  p_{_{F_1}} = p_{_{F_1}} z_2 p_{_{F_1}}$ (compare Remark \ref{remark [zF]}).\smallskip

By Remark \ref{remark Stamfli and norms of derivations} we have $$\hbox{diam}(\sigma(z_1)) = \| [z_1, .]\| =\| [p_{_{F_1}} z_2 p_{_{F_1}},.]|_{(p_{_{F_1}}B(H) p_{_{F_1}})} \|  $$ $$= \|p_{_{F_1}} [ z_2 , p_{_{F_1}} . p_{_{F_1}}] p_{_{F_1}} \| \leq \| [ z_2 , .]\| = \hbox{diam}(\sigma(z_2)) $$
\end{proof}

\begin{proposition}\label{p unboundedness of diamzF passes to subnets} Let $\Delta : B(H) \to B(H)$ be a weak-2-local derivation with $\Delta^{\sharp} = \Delta$. Suppose that the set $\mathcal{D}\hbox{iam} (\Delta)=\left\{ \hbox{diam} (\sigma(w_{_F})) : w_F\in[z_{_F}], \ F\in \mathfrak{F} (H) \right\}$ is unbounded. Then for each $G\in \mathfrak{F} (H)$, the set $$\mathcal{D}\hbox{iam}_{G}^+=\left\{ \hbox{diam} (\sigma(w_{_F})) : w_F\in[z_{_F}], \ F\in \mathfrak{F} (H), \ F\supseteq G \right\}$$ is unbounded.
\end{proposition}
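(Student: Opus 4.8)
The plan is to argue by contraposition, exploiting the monotonicity of the assignment $F \mapsto \hbox{diam}(\sigma(z_{_F}))$ established in Lemma~\ref{l F1 in F2}, together with the elementary fact that $\mathfrak{F}(H)$ is directed by inclusion.

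First I would record that, by the final assertion of Lemma~\ref{l F1 in F2}, the number $\hbox{diam}(\sigma(w_{_F}))$ is independent of the choice of representative $w_{_F}\in [z_{_F}]$; hence it is legitimate to write $d(F):=\hbox{diam}(\sigma(z_{_F}))$ for a well-defined function $d:\mathfrak{F}(H)\to [0,\infty)$. The same lemma shows that $d$ is order preserving: whenever $F_1\subseteq F_2$ in $\mathfrak{F}(H)$ one has $d(F_1)\leq d(F_2)$.

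Next, fix $G\in \mathfrak{F}(H)$ and suppose, towards a contradiction, that the set $\mathcal{D}\hbox{iam}_{G}^+$ is bounded, say $d(F)\leq M$ for every $F\in \mathfrak{F}(H)$ with $F\supseteq G$. Given an arbitrary $F\in \mathfrak{F}(H)$, I would consider $F':=F+G$, the linear span of $F\cup G$; this again lies in $\mathfrak{F}(H)$ and satisfies both $F\subseteq F'$ and $G\subseteq F'$. The assumed bound gives $d(F')\leq M$, and monotonicity then yields $d(F)\leq d(F')\leq M$. Since $F$ was arbitrary, $\mathcal{D}\hbox{iam}(\Delta)$ would be bounded by $M$, contradicting the hypothesis that it is unbounded. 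Therefore $\mathcal{D}\hbox{iam}_{G}^+$ must be unbounded, as claimed.

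I do not expect any genuine obstacle here: essentially all the content is already packaged in Lemma~\ref{l F1 in F2}. The only conceptual point to keep in mind is that $\{F\in \mathfrak{F}(H): F\supseteq G\}$ is cofinal in $\mathfrak{F}(H)$ — which is precisely what the passage from $F$ to $F+G$ achieves — so that a uniform bound on the restricted family forces a uniform bound on the whole family, and unboundedness is transferred back to the restricted family.
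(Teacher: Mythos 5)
Your proof is correct and is essentially the paper's own argument: both rest on the monotonicity of $F\mapsto \hbox{diam}(\sigma(w_{_F}))$ from Lemma~\ref{l F1 in F2} together with the directedness of $\mathfrak{F}(H)$ (passing from $F$ and $G$ to a common superspace). The only difference is presentational — you phrase it contrapositively (a bound on $\mathcal{D}\hbox{iam}_{G}^+$ would bound all of $\mathcal{D}\hbox{iam}(\Delta)$), while the paper transfers unboundedness directly — which is logically the same step.
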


\begin{proof}
Let us fix an arbitrary $G\in \mathfrak{F} (H)$. For each $F\in \mathfrak{F} (H)$ we can find $K\in \mathfrak{F} (H)$ with $G,F\subseteq K$. Applying Lemma \ref{l F1 in F2} we have $$\hbox{diam} (\sigma(w_{_F})), \hbox{diam} (\sigma(w_{_G})) \leq \hbox{diam} (\sigma(w_{_K})),$$ for every $w_F\in[z_{_F}]$, $w_G\in[z_{_G}]$ and $w_K\in[z_{_K}]$. The unboundedness of $\mathcal{D}\hbox{iam}$ implies the same property for $\mathcal{D}\hbox{iam}_{G}^+$.
\end{proof}

\subsection{An identity principle for weak-2-local derivations}

Let $\Delta : B(H) \to B(H)$ be a weak-2-local derivation with $\Delta^{\sharp} = \Delta$. Suppose there exists $G\in \mathfrak{F} (H)$ such that the set $$\mathcal{D}\hbox{iam}_{G^{\perp}}^-=\left\{ \hbox{diam} (\sigma(w_{_F})) : w_F\in[z_{_F}], \ F\in \mathfrak{F} (H), \ p_{_{F}} \leq p_{_{G}}^{\perp} \right\}$$ is bounded. For each $F\in \mathfrak{F} (H)$ with $p_{_{F}} \leq p_{_{G}}^{\perp}$, the element $\widetilde{z}_{_F}$ has been chosen to satisfy $$\| \widetilde{z}_{_F}\| \leq \hbox{diam}(\sigma(\widetilde{z}_{_F}))\leq  2 \| \widetilde{z}_{_F}\|.$$ Therefore, the net $(\widetilde{z}_{_F})_{_{F\in \mathfrak{F} (H), p_{_{F}} \leq p_{_{G}}^{\perp}}}$ is bounded in $ p_{_{G}}^{\perp} B(H) p_{_{G}}^{\perp}$. By Alaoglu's theorem, we can find $z_0\in p_{_{G}}^{\perp} B(H) p_{_{G}}^{\perp}$ with $z_0=-z_0^*$ and a subnet $(\widetilde{z}_{_F})_{F\in \Lambda}$ converging to $z_0$ in the weak$^*$-topology of $p_{_{G}}^{\perp} B(H) p_{_{G}}^{\perp}$.\smallskip

If the set $\mathcal{D}\hbox{iam} (\Delta)=\mathcal{D}\hbox{iam}_{\{0\}^{\perp}}^-=\left\{ \hbox{diam} (\sigma(w_{_F})) : w_F\in[z_{_F}], \ F\in \mathfrak{F} (H) \right\}$ is bounded, we can similarly define, via Alaoglu's theorem, an element $z_0 = -z_0^*\in B(H)$ which is the weak$^*$-limit of a convenient subnet of $(\widetilde{z}_{_F})_{_{F\in \mathfrak{F} (H)}}.$

\begin{proposition}\label{prop Id Princ 1} Let $\Delta : B(H) \to B(H)$ be a weak-2-local derivation with $\Delta^{\sharp} = \Delta$. Suppose there exists $G\in \mathfrak{F} (H)$ such that the set $$\mathcal{D}\hbox{iam}_{G^{\perp}}^-=\left\{ \hbox{diam} (\sigma(w_{_F})) : w_F\in[z_{_F}], \ F\in \mathfrak{F} (H), \ p_{_{F}} \leq p_{_{G}}^{\perp} \right\}$$ is bounded, and let $z_0\in p_{_{G}}^{\perp} B(H)p_{_{G}}^{\perp} $ {\rm(}$z_0=-z_0^*${\rm)} be the element determined in the previous paragraph. Then $p_{_{G}}^{\perp} \Delta (p) p_{_{G}}^{\perp} = [z_0,p]$, for every projection $p\in \mathcal{F} (H)$ with $p\leq p_{_{G}}^{\perp}$.\smallskip

If the set $\mathcal{D}\hbox{iam} (\Delta)=\mathcal{D}\hbox{iam}_{\{0\}^{\perp}}^-=\left\{ \hbox{diam} (\sigma(w_{_F})) : w_F\in[z_{_F}], \ F\in \mathfrak{F} (H) \right\}$ is bounded, then $\Delta (p) = [z_0,p]$, for every projection $p\in \mathcal{F} (H)$.
\end{proposition}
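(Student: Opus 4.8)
The plan is to fix an arbitrary projection $p\in \mathcal{F} (H)$ with $p\leq p_{_G}^{\perp}$ and to exhibit both $p_{_G}^{\perp}\Delta(p)p_{_G}^{\perp}$ and $[z_0,p]$ as weak$^*$-limits along the convergent subnet $(\widetilde{z}_{_F})_{F\in \Lambda}$ fixed before the statement. Let $F_0$ denote the range of $p$, so that $F_0\in \mathfrak{F} (H)$ and $p=p_{_{F_0}}$. Since the subnet $\Lambda$ is cofinal, I may restrict attention to those $F\in \Lambda$ with $F\supseteq F_0$; for each such $F$ one has $p\leq p_{_F}$, whence $p_{_F} p\, p_{_F}=p$ and $[p_{_F},p]=0$.

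For each such $F$, applying Lemma \ref{l zF} to $a=p$ yields $p_{_F}\Delta(p)p_{_F}=[z_{_F},p]$. Any two representatives in $[z_{_F}]$ differ by a central element $i\lambda p_{_F}$ (Remark \ref{remark [zF]}), and since $[p_{_F},p]=0$ the value of the commutator is independent of the chosen representative; hence $p_{_F}\Delta(p)p_{_F}=[\widetilde{z}_{_F},p]$ for every $F\in \Lambda$ with $F\supseteq F_0$.

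I now pass to the weak$^*$-limit along $\Lambda$ on both sides. On the right-hand side, left and right multiplication by the fixed operator $p$ are weak$^*$-continuous on $B(H)$, so $\widetilde{z}_{_F}\to z_0$ weak$^*$ gives $[\widetilde{z}_{_F},p]=\widetilde{z}_{_F}p-p\widetilde{z}_{_F}\to z_0 p-p z_0=[z_0,p]$ in the weak$^*$-topology. For the left-hand side, the net $(p_{_F})_{p_{_F}\leq p_{_G}^{\perp}}$ converges to $p_{_G}^{\perp}$ in the strong operator topology, and therefore so does its subnet $(p_{_F})_{F\in \Lambda}$; writing $p_{_F}\Delta(p)p_{_F}-p_{_G}^{\perp}\Delta(p)p_{_G}^{\perp}=p_{_F}\Delta(p)(p_{_F}-p_{_G}^{\perp})+(p_{_F}-p_{_G}^{\perp})\Delta(p)p_{_G}^{\perp}$ and using $\|p_{_F}\|\leq 1$ together with the continuity of $\Delta(p)$, I obtain $p_{_F}\Delta(p)p_{_F}\to p_{_G}^{\perp}\Delta(p)p_{_G}^{\perp}$ strongly. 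As this net is bounded by $\|\Delta(p)\|$, strong convergence forces weak$^*$-convergence to the same limit. By uniqueness of weak$^*$-limits, $p_{_G}^{\perp}\Delta(p)p_{_G}^{\perp}=[z_0,p]$, which is the first assertion.

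The second assertion is the special case $G=\{0\}$: there $p_{_G}^{\perp}=\mathrm{Id}_{H}$ and $\mathcal{D}\hbox{iam}_{\{0\}^{\perp}}^-=\mathcal{D}\hbox{iam}(\Delta)$, so the identity just obtained reads $\Delta(p)=[z_0,p]$ for every projection $p\in \mathcal{F} (H)$. The step I expect to be most delicate is the reconciliation of the two topologies across the limit: the elements $\widetilde{z}_{_F}$ are controlled only in the weak$^*$-topology, so the convergence of the left-hand side must be routed through the strong operator convergence of the projections $p_{_F}$ (a genuine subnet of an SOT-convergent net) and then transferred to the weak$^*$-topology by invoking the uniform bound on the net.
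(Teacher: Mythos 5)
Your proof is correct and follows essentially the same route as the paper's: both fix $p$, identify $p_{_F}\Delta(p)p_{_F}=[\widetilde{z}_{_F},p]$ for the subnet indices $F$ containing the range of $p$, and pass to weak$^*$-limits, using strong-operator convergence of $(p_{_F})$ plus boundedness on the left-hand side and separate weak$^*$-continuity of multiplication on the right. The only cosmetic differences are that the paper invokes joint strong$^*$-continuity of the product on bounded sets (\cite[Proposition 1.8.12]{S}) where you do an explicit telescoping estimate, and it cites \cite[Lemma 3.2]{NiPe2015} where you simply observe $p_{_F}pp_{_F}=p$.
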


\begin{proof} Let us fix a finite-rank projection $p\in \mathcal{F} (H)$ with $p\leq p_{_{G}}^{\perp}$. Since $(\widetilde{z}_{_F})_{F\in \Lambda}$ converges to $z_0$ in the weak$^*$ topology of $p_{_{G}}^{\perp}B(H)p_{_{G}}^{\perp}$, where $(\widetilde{z}_{_F})_{F\in \Lambda}$ is the subnet fixed before Proposition \ref{prop Id Princ 1}, there exists $F_0\in \Lambda$ such that $p\leq p_{_{F_0}}$ (we observe that, under these hypothesis, there exits a monotone final function $h:\mathfrak{F} (H)\to \Lambda$ which defines the subnet). The subnet $(\widetilde{z}_{_F})_{F_0\subseteq F\in \Lambda}$ converges to $z_0$ in the weak$^*$ topology of $B(H)$.\smallskip

Clearly, the net $(p_{_F})_{F\in \mathfrak{F} (H)}$ converges to the projection $p_{_{G}}^{\perp}$ in the strong$^*$ topology of $B(H)$. Therefore the subnet $(p_{_F})_{F_0\subseteq F\in \Lambda} \to p_{_{G}}^{\perp}$ in the strong$^*$ topology of $B(H)$. Since for each $F\in \Lambda$ with $F_0\subseteq F$ we have $p\leq p_{_{F_0}}\leq p_{_{F}}$, we deduce, via \cite[Lemma 3.2]{NiPe2015}, Lemma \ref{l zF} and Remark \ref{remark [zF]}, that \begin{equation}\label{eq 1 prop ID princ 1} p_{_F} \Delta (p) p_{_F} = p_{_F} \Delta ( p_{_F} p p_{_F}) p_{_F} = p_{_F} [ \widetilde{z}_{_F} ,p_{_F} p p_{_F}] p_{_F} = [ \widetilde{z}_{_F} , p  ].
\end{equation} It is known that the product of every von Neumann algebra is jointly strong$^*$-continuous on bounded sets (see \cite[Proposition 1.8.12]{S}), we thus deduce that the net $(  p_{_F} \Delta (p) p_{_F} )_{F_0\subseteq F\in \Lambda} \to p_{_{G}}^{\perp} \Delta(p) p_{_{G}}^{\perp}$ in the strong$^*$ topology of $B(H)$, and hence $( p_{_F} \Delta (p) p_{_F} )_{F_0\subseteq F\in \Lambda} \to p_{_{G}}^{\perp} \Delta(p) p_{_{G}}^{\perp}$ also in the weak$^*$ topology (compare \cite[Theorem 1.8.9]{S}). This shows that the left-hand side in \eqref{eq 1 prop ID princ 1} converges to $p_{_{G}}^{\perp} \Delta (p) p_{_{G}}^{\perp} $ in the weak$^*$-topology of $B(H)$.\smallskip

Finally, the separate weak$^*$-continuity of the product of $B(H)$ (cf. \cite[Theorem 1.7.8]{S}) shows that the right-hand side in \eqref{eq 1 prop ID princ 1}  converges to $p_{_{G}}^{\perp} [z_0,p] p_{_{G}}^{\perp}= [z_0, p]$ in the weak$^*$-topology. Therefore, $p_{_{G}}^{\perp} \Delta (p) p_{_{G}}^{\perp} = [z_0,p]$ as we desired. The second statement follows from the same arguments.
\end{proof}

We can state now an identity principle for weak-2-local derivations on $B(H)$.

\begin{theorem}\label{t id princ} Let $\Delta : B(H) \to B(H)$ be a weak-2-local derivation with $\Delta^{\sharp} = \Delta$. Let $p_0\in \mathcal{F} (H)$ be a finite rank projection. Suppose $z_0$ is a skew symmetric element in $(1-p_0)B(H)(1-p_0)$ such that  $(1-p_0) \Delta (p) (1-p_0) = [z_0,p]$, for every finite-rank projection $p\in (1-p_0) B (H)(1-p_0)$. Then $$(1-p_0) \Delta ((1-p_0)a(1-p_0)) (1-p_0)= [z_0,(1-p_0) a(1-p_0) ],$$ for every $a\in B(H)$. If in addition $p_0=0$, then $\Delta = [z_0,.]$ is a linear derivation on $B(H)$.
\end{theorem}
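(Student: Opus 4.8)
The plan is to reduce the assertion to an equality of sesquilinear forms and then to deduce the general identity from the one already available on finite-rank elements by a single, carefully engineered application of the weak-2-local property. Throughout write $q=1-p_0$ and $K=qH$, so that $(1-p_0)B(H)(1-p_0)$ is identified with $B(K)$; fix $a\in B(H)$ and put $b=qaq\in qB(H)q$, noting $bq=qb=b$.

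First I would settle the finite-rank case. Since $\Delta|_{\mathcal{F}(H)}$ is linear by Lemma \ref{lema NIPe finite ranks} and $T\mapsto qTq$ is linear, the hypothesis $q\Delta(p)q=[z_0,p]$ for finite-rank projections $p\leq q$ extends, via the spectral resolution of a self-adjoint finite-rank $x=qxq=\sum_i\lambda_i p_i$ (with $p_i\leq q$, $\lambda_i\in\mathbb{R}$) and then by complex linearity, to the identity $q\Delta(x)q=[z_0,x]$ for every $x\in\mathcal{F}(H)$ with $x=qxq$.

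The core step is the passage from finite-rank elements to $b$. Fix vectors $\eta,\xi\in K$ and let $\varphi=\omega_{\eta,\xi}\in B(H)^*$ be given by $\varphi(T)=\langle T\eta,\xi\rangle$. The idea is to produce a finite-rank $x=qxq$ that agrees with $b$ \emph{in the two directions seen by} $\varphi$. Choosing a finite-dimensional $N\subseteq K$ with $\eta,\, b^*\xi\in N$ and orthogonal projection $p_N\leq q$, I would set $x=bp_N$. Then $x$ is finite-rank, $x=qxq$, and $c:=b-x=b(q-p_N)$ satisfies $c\eta=0$ and $c^*\xi=0$; hence $\varphi[w,c]=\langle wc\eta,\xi\rangle-\langle w\eta,c^*\xi\rangle=0$ for every $w\in B(H)$. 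Applying the weak-2-local property of $\Delta$ to the pair $(b,x)$ and the functional $\varphi$ yields $w\in B(H)$ with $\varphi\Delta(b)=\varphi[w,b]$ and $\varphi\Delta(x)=\varphi[w,x]$ (the local derivations are inner by Sakai's theorem). Chaining these with the vanishing of $\varphi[\cdot,c]$ (used once with $w$ and once with $z_0$) and with the finite-rank case gives
$$\varphi\Delta(b)=\varphi[w,b]=\varphi[w,x]=\varphi\Delta(x)=\varphi[z_0,x]=\varphi[z_0,b].$$
Since $\eta,\xi\in K$, this reads $\langle q\Delta(b)q\,\eta,\xi\rangle=\langle[z_0,b]\eta,\xi\rangle$, and letting $\eta,\xi$ range over $K$ forces $q\Delta(qaq)q=[z_0,qaq]$. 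The case $p_0=0$ is the specialization $q=1$, giving $\Delta=[z_0,\cdot]$ on all of $B(H)$, a linear inner derivation.

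The step I expect to be the main obstacle — indeed the only genuinely nontrivial idea — is the construction of the interpolating finite-rank $x$: one must arrange that $b-x$ is annihilated by $\varphi$ on both sides (killing $\eta$, with adjoint killing $\xi$), for only then do the uncontrolled commutators $\varphi[w,\cdot]$ and $\varphi[z_0,\cdot]$ transfer cleanly between $b$ and $x$. Everything else (linearity on finite ranks, innerness of derivations, the single weak-2-local evaluation, and separation of operators by the forms $\omega_{\eta,\xi}$) is routine. I emphasize that no continuity of $\Delta$ is invoked, which is precisely why this argument sidesteps the fact that the inner derivations furnished by the weak-2-local property need not be uniformly bounded.
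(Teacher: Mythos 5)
Your proof is correct. Its first half --- upgrading the hypothesis from finite-rank projections under $q=1-p_0$ to all of $q\mathcal{F}(H)q$ via the linearity of $\Delta|_{\mathcal{F}(H)}$ (Lemma \ref{lema NIPe finite ranks}) and spectral resolution --- is exactly the paper's opening step. The transfer to a general $b=qaq$ is where you genuinely depart from the paper. There, for a finite-rank projection $p_1\leq q$, the paper invokes \cite[Lemma 3.2]{NiPe2015} to write $p_1\Delta(a)p_1=p_1\Delta\bigl(p_1ap_1+p_1ap_1^{\perp}+p_1^{\perp}ap_1\bigr)p_1$, applies the finite-rank identity to the (finite-rank) argument, and then lets $p_{_F}\nearrow q$ in the strong$^*$ topology, using the joint strong$^*$-continuity of multiplication on bounded sets (\cite[Proposition 1.8.12]{S}) to pass to the limit. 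You instead work one vector functional $\omega_{\eta,\xi}$ ($\eta,\xi\in K$) at a time: the interpolant $x=bp_{_N}$, with $N\ni\eta,\,b^*\xi$, forces $\omega_{\eta,\xi}[w,b-x]=0$ for \emph{every} $w\in B(H)$, so a single weak-2-local evaluation at the pair $(b,x)$ (whose local derivations are inner by Sakai's theorem) transfers the identity directly, with no net convergence at all. Your interpolation device is in effect a vector-functional version of the compression principle that the paper imports as a black box, so the two arguments are close relatives; what yours buys is a limit-free, self-contained proof that stays at the algebraic level (no appeal to \cite[Lemma 3.2]{NiPe2015} nor to strong$^*$-topology facts), while the paper's choice keeps the proof aligned with the devices --- compressions $p_{_F}\Delta p_{_F}$ and strong$^*$-limits along $\mathfrak{F}(H)$ --- that it uses repeatedly elsewhere in Section 2.
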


\begin{proof} Let $D: B(H)\to B(H)$ denote the $^*$-derivation defined by $D(a)=[z_0,a]$ ($a\in B(H)$). Lemma \ref{lema NIPe finite ranks} (see also \cite[Lemma 3.4 and Proposition 3.1]{NiPe2015}) assures that $\Delta|_{\mathcal{F} (H)} : \mathcal{F} (H)\to \mathcal{F} (H)$ is a linear mapping. Since every element in $\mathcal{F} (H)$ can be written as a finite linear combination of finite-rank projections in $B(H)$, it follows from our hypothesis that \begin{equation}\label{eq D and Delta conincide on F(H)} (1-p_0) \Delta (1-p_0)|_{ (1-p_0)\mathcal{F} (H)(1-p_0)} = D|_{(1-p_0)\mathcal{F} (H)(1-p_0)}= [z_0,.]|_{(1-p_0)\mathcal{F} (H)(1-p_0)}.
\end{equation}\smallskip

Fix $a$ in $(1-p_0)B(H)(1-p_0)$ and a finite-rank projection $p_1\leq  1-p_0$. Having in mind that $p_1 a p_1 + p_1 a p_1^\perp +p_1^\perp a p_1\in (1-p_0)\mathcal{F} (H)(1-p_0)$, Lemma 3.2 in \cite{NiPe2015}, and \eqref{eq D and Delta conincide on F(H)}, we conclude that \begin{equation}\label{eq 2 in final thm} p_1 \Delta (a) p_1 = p_1 \Delta (p_1 a p_1 + p_1 a p_1^\perp +p_1^\perp a p_1) p_1 = p_1 [z_0, (p_1 a p_1 + p_1 a p_1^\perp +p_1^\perp a p_1)] p_1.
\end{equation}

The net $(p_{_F})_{\stackrel{ F\in \mathfrak{F} (H)}{\scriptscriptstyle p_{_F}\leq 1-p_0}}$ converges to $1-p_0$ in the strong$^*$ topology of $B(H)$. We deduce from \eqref{eq 2 in final thm} that $$p_{_F} \Delta (a) p_{_F} = p_{_F} [z_0, p_{_F} a + p_{_F}^\perp a p_{_F}] p_{_F},$$ for every $F\in \mathfrak{F}(H)$ with $p_{_F}\leq 1-p_0$. Taking strong$^*$-limits in the above identity, it follows from the joint strong$^*$-continuity of the product in $B(H)$ that $$(1-p_0) \Delta (a) (1-p_0)= [z_0,a],$$ which finishes the proof.
\end{proof}

Our next result is a consequence of Proposition \ref{prop Id Princ 1} and Theorem \ref{t id princ}.

\begin{corollary}\label{c id principle 3} Let $\Delta : B(H) \to B(H)$ be a weak-2-local derivation with $\Delta^{\sharp} = \Delta$. Suppose that one of the following statements holds:
\begin{enumerate}[$(a)$] \item The set $\mathcal{D}\hbox{iam} (\Delta)=\left\{ \hbox{diam} (\sigma(w_{_F})) : w_F\in[z_{_F}], \ F\in \mathfrak{F} (H) \right\}$ is bounded;
\item The set $\left\{ \left\| p_{_F} \Delta p_{_F}|_{p_{_F}B(H) p_{_F}} \right\| :  F\in \mathfrak{F} (H) \right\}$ is bounded.
\end{enumerate} Then $\Delta$ is a linear derivation.
\end{corollary}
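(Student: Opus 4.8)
The plan is to derive both cases from Proposition \ref{prop Id Princ 1} and Theorem \ref{t id princ}: case $(a)$ follows almost immediately, and case $(b)$ reduces to case $(a)$ via a norm computation.

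For case $(a)$, observe that the stated hypothesis is exactly the boundedness of the set $\mathcal{D}\hbox{iam}(\Delta)=\mathcal{D}\hbox{iam}_{\{0\}^{\perp}}^{-}$. Hence the second assertion of Proposition \ref{prop Id Princ 1} applies and produces a skew-symmetric element $z_0=-z_0^*\in B(H)$ with $\Delta(p)=[z_0,p]$ for every finite-rank projection $p\in\mathcal{F}(H)$. I would then invoke Theorem \ref{t id princ} with $p_0=0$ (so that $1-p_0=1$): its hypotheses hold verbatim, and its conclusion yields $\Delta=[z_0,.]$, a linear derivation on $B(H)$.

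For case $(b)$, the plan is to show that the set in $(b)$ coincides with $\mathcal{D}\hbox{iam}(\Delta)$, so that $(b)$ is equivalent to $(a)$. Fixing $F\in\mathfrak{F}(H)$, Lemma \ref{l zF} gives $p_{_F}\Delta p_{_F}|_{p_{_F}B(H)p_{_F}}=[z_{_F},.]$ for a skew-symmetric $z_{_F}\in p_{_F}B(H)p_{_F}$. Since $p_{_F}B(H)p_{_F}$ is $^*$-isomorphic to $B(F)$ with $F$ finite dimensional, and $z_{_F}$ is normal with $\sigma(z_{_F})\subset i\mathbb{R}$, equation \eqref{eq norm [z,.] for skew z} applied inside this algebra gives $\big\|p_{_F}\Delta p_{_F}|_{p_{_F}B(H)p_{_F}}\big\|=\hbox{diam}(\sigma(z_{_F}))$. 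By Lemma \ref{l F1 in F2} the quantity $\hbox{diam}(\sigma(w_{_F}))$ is the same for every $w_{_F}\in[z_{_F}]$, so it equals $\big\|p_{_F}\Delta p_{_F}|_{p_{_F}B(H)p_{_F}}\big\|$. Letting $F$ range over $\mathfrak{F}(H)$, the two sets in $(a)$ and $(b)$ are identical, whence $(b)\Leftrightarrow(a)$ and case $(b)$ follows.

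The only point requiring care, and thus the main (if minor) obstacle, is the norm identity underlying the reduction of $(b)$ to $(a)$: one must apply Stampfli's theorem in the finite-dimensional algebra $p_{_F}B(H)p_{_F}\cong B(F)$ rather than in $B(H)$, so that the operator norm $\big\|p_{_F}\Delta p_{_F}|_{p_{_F}B(H)p_{_F}}\big\|$ is matched with the norm of $[z_{_F},.]$ computed on that same algebra, and to use that $\sigma(z_{_F})$ is purely imaginary so that $2\rho(\sigma(z_{_F}))=\hbox{diam}(\sigma(z_{_F}))$. With this identification and the choice-independence of the spectral diameter from Lemma \ref{l F1 in F2}, the equivalence $(a)\Leftrightarrow(b)$ is transparent and the corollary reduces to the already-established case $(a)$.
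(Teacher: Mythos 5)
Your proposal is correct and follows essentially the same route as the paper: case $(a)$ via the second statement of Proposition \ref{prop Id Princ 1} combined with Theorem \ref{t id princ} (taking $p_0=0$), and case $(b)$ reduced to case $(a)$ through the Stampfli-type identity $\left\| p_{_F} \Delta p_{_F}|_{p_{_F}B(H) p_{_F}} \right\| = \hbox{diam}(\sigma(\widetilde{z}_{_F}))$ computed inside the finite-dimensional algebra $p_{_F}B(H)p_{_F}$. Your explicit remark that Stampfli's theorem must be applied in $p_{_F}B(H)p_{_F}\cong B(F)$, together with the choice-independence of the spectral diameter from Lemma \ref{l F1 in F2}, is exactly what the paper's one-line chain of (in)equalities uses implicitly.
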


\begin{proof} If $\Delta$ satisfies $(a)$, the the conclusion follows straightforwardly from Proposition \ref{prop Id Princ 1} and Theorem \ref{t id princ}. If we assume $(b)$ we simply observe that for each $F\in \mathfrak{F} (H)$ we have $$\| \widetilde{z}_{_F}\| \leq \hbox{diam}(\sigma(\widetilde{z}_{_F}))= \left\|[\widetilde{z}_{_F},.]|_{p_{_F}B(H) p_{_F}}\right\|= \left\| p_{_F} \Delta p_{_F}|_{p_{_F}B(H) p_{_F}} \right\|\leq  2 \| \widetilde{z}_{_F}\|$$ (compare Remarks \ref{remark [zF]} and \ref{remark Stamfli and norms of derivations}).
\end{proof}

The following lemma states a simple property of derivations on $M_n$. The proof is left to the reader.

\begin{lemma}\label{l new tech one} Let $D: M_n \to M_n$ be a $^*$-derivation. Suppose $p_1$ is a rank one projections in $M_n$. If $D(a) = 0$ for every $a= p_1^{\perp} a p_1^{\perp}$ in $M_n$, then there exists $\alpha\in i\mathbb{R}$ such that $D(x) = [\alpha p_1, x]$ for all $x\in M_n$.$\hfill\Box$
\end{lemma}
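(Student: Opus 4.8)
The plan is to use that every $^*$-derivation on the finite-dimensional algebra $M_n$ is inner and can be implemented by a skew-adjoint element. We may assume $n\geq 2$, since for $n=1$ the algebra $M_1=\mathbb{C}$ admits only the zero $^*$-derivation and the conclusion is trivial with any $\alpha\in i\mathbb{R}$. By Sakai's theorem there is $z\in M_n$ with $D=[z,.]$, and since $D$ is a $^*$-derivation we may choose $z$ with $z=-z^*$.

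The hypothesis $D(a)=0$ for every $a=p_1^{\perp}a p_1^{\perp}$ says exactly that $z$ commutes with the whole corner $p_1^{\perp} M_n p_1^{\perp}\cong M_{n-1}$. Decomposing $z$ along $1=p_1+p_1^{\perp}$ as $z=p_1 z p_1 + p_1 z p_1^{\perp} + p_1^{\perp} z p_1 + p_1^{\perp} z p_1^{\perp}$ and imposing $[z,a]=0$ for all $a\in p_1^{\perp}M_n p_1^{\perp}$, I would test first against $a=p_1^{\perp}$ to obtain $[z,p_1^\perp]=p_1 z p_1^\perp - p_1^\perp z p_1=0$, which, the two summands lying in different blocks, forces $p_1 z p_1^{\perp}=p_1^{\perp} z p_1=0$; then I would test against arbitrary $a$ to force $p_1^{\perp} z p_1^{\perp}$ to be central in $p_1^{\perp}M_n p_1^{\perp}$, hence a scalar multiple $\mu p_1^{\perp}$. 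Writing the remaining corner as $p_1 z p_1=\gamma p_1$, this yields $z=\gamma p_1 + \mu p_1^{\perp}=(\gamma-\mu)p_1+\mu 1$. Because $1$ is central, $D=[z,.]=[(\gamma-\mu)p_1,.]$.

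It remains to check that $\alpha:=\gamma-\mu$ is purely imaginary, which is where the $^*$-derivation hypothesis pays off. With the skew-adjoint choice $z=-z^*$, the identities $p_1^*=p_1$ and $(p_1^{\perp})^*=p_1^{\perp}$ give $\bar\gamma=-\gamma$ and $\bar\mu=-\mu$, so $\gamma,\mu\in i\mathbb{R}$ and hence $\alpha\in i\mathbb{R}$; alternatively, imposing $D(x^*)=D(x)^*$ directly yields $(\alpha+\bar\alpha)(p_1 x^* - x^* p_1)=0$ for all $x$, and since $p_1$ is noncentral for $n\geq 2$ this forces $\alpha+\bar\alpha=0$. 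Either way $D(x)=[\alpha p_1,x]$ for all $x\in M_n$ with $\alpha\in i\mathbb{R}$. The argument is entirely routine; the only mild subtlety is to remember to use the $^*$-structure (and not merely that $D$ is a derivation) in order to land in $i\mathbb{R}$ rather than in $\mathbb{C}$.
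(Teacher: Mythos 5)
Your proof is correct. The paper offers no proof of this lemma (it is explicitly ``left to the reader''), and your argument --- writing $D=[z,\cdot]$ with $z$ skew-adjoint via innerness, using the hypothesis to annihilate the off-diagonal blocks $p_1 z p_1^{\perp}$, $p_1^{\perp} z p_1$ and to force $p_1^{\perp} z p_1^{\perp}$ to be a scalar multiple of $p_1^{\perp}$, and then reading off $\alpha=\gamma-\mu\in i\mathbb{R}$ from $z=-z^*$ --- is exactly the routine verification the authors intended.
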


We state now an infinite dimensional analog of the previous lemma.

\begin{proposition}\label{p new tech one} Let $\Delta: B(H) \to B(H)$ be a weak-2-local derivation with $\Delta^{\sharp} = \Delta$. Suppose $p_0$ is rank one projection in $M_n$ such that $\Delta (a) = 0$ for every $a= p_0^{\perp}a p_0^{\perp}$ in $B(H)$, then there exists $\alpha\in i\mathbb{R}$ such that $\Delta (x) = [\alpha p_0, x]$ for all $x\in B(H)$.
\end{proposition}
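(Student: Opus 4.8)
The plan is to reduce the infinite-dimensional statement to the finite-dimensional Lemma~\ref{l new tech one} by compressing $\Delta$ to finite-rank corners containing $p_0$, and then to pass to the limit using the machinery already developed in this section. First I would observe that for any finite-rank projection $p_{_F}$ with $p_0\leq p_{_F}$, the compressed map $p_{_F}\Delta p_{_F}|_{p_{_F}B(H)p_{_F}}$ is, by Lemma~\ref{l zF}, a $^*$-derivation on the matrix algebra $p_{_F}B(H)p_{_F}\cong M_{n_F}$, implemented by some skew-adjoint $z_{_F}$. The hypothesis $\Delta(a)=0$ for all $a=p_0^\perp a p_0^\perp$ must be transferred down to this corner: for $b=(p_{_F}-p_0)c(p_{_F}-p_0)\in p_{_F}B(H)p_{_F}$ with $p_0 b=b p_0=0$, the element $b$ satisfies $b=p_0^\perp b p_0^\perp$, so $\Delta(b)=0$, and compressing gives $p_{_F}\Delta p_{_F}(b)=0$. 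Hence the finite-dimensional derivation $D_F:=p_{_F}\Delta p_{_F}|_{p_{_F}B(H)p_{_F}}$ kills every element orthogonal to $p_0$ inside this corner, and Lemma~\ref{l new tech one} (with $p_1=p_0$ viewed as a rank-one projection in $M_{n_F}$) yields a scalar $\alpha_F\in i\mathbb{R}$ with $D_F(x)=[\alpha_F p_0,x]$ for all $x\in p_{_F}B(H)p_{_F}$.

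Next I would show that the scalars $\alpha_F$ stabilize, or at least that the associated derivations are compatible across the net. Since $z_{_F}$ and $\alpha_F p_0$ implement the same derivation on $p_{_F}B(H)p_{_F}$, they differ by a central element, i.e.\ $z_{_F}-\alpha_F p_{_F}\in\mathbb{C}p_{_F}$; using skew-adjointness we may as well take $z_{_F}=\alpha_F p_0 + i\mu_F p_{_F}$ for real $\mu_F$, and the minimal-norm representative $\widetilde{z}_{_F}$ from Remark~\ref{remark [zF]} will have controlled norm. The key finiteness input is that $\mathrm{diam}(\sigma(\alpha_F p_0))$ is bounded independently of $F$: indeed, for $F_1\subseteq F_2$ Lemma~\ref{l F1 in F2} forces $[\alpha_{F_1}p_0,\cdot]=[\alpha_{F_2}p_0,\cdot]$ on the smaller corner, and since $p_0$ is common to all corners this pins down a single value $\alpha:=\alpha_F$ for all $F\supseteq\text{(the rank-one support of }p_0)$. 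Thus $\mathcal{D}\mathrm{iam}(\Delta)$ is bounded.

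With boundedness in hand, I would invoke Corollary~\ref{c id principle 3}(a): the set $\mathcal{D}\mathrm{iam}(\Delta)$ is bounded, so $\Delta$ is a linear derivation, necessarily inner, $\Delta=[z_0,\cdot]$ for some skew-adjoint $z_0\in B(H)$. To identify $z_0$ with $\alpha p_0$ I would use Proposition~\ref{prop Id Princ 1}: the weak$^*$-limit $z_0$ of the net $(\widetilde{z}_{_F})$ satisfies $\Delta(p)=[z_0,p]$ for every finite-rank projection $p$, and since each $\widetilde{z}_{_F}$ equals $\alpha p_0$ modulo a central term $i\mu_F p_{_F}$ whose contribution to any commutator $[\,\cdot\,,p]$ vanishes, the limit acts exactly as $[\alpha p_0,\cdot]$ on finite-rank projections. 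Because two inner derivations that agree on all finite-rank projections agree on $\mathcal{F}(H)$ and hence (by continuity and weak$^*$-density) on all of $B(H)$, we conclude $\Delta=[\alpha p_0,\cdot]$, giving the desired $\alpha\in i\mathbb{R}$.

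The main obstacle I anticipate is the bookkeeping in the second step: controlling the central ambiguity $i\mu_F p_{_F}$ so that it does not pollute the limit, and verifying that the $\alpha_F$ genuinely coincide rather than merely staying bounded. The subtlety is that $p_0$ has rank one while the corners grow, so one must check that Lemma~\ref{l F1 in F2}'s compatibility relation $p_{_{F_1}}z_2 p_{_{F_1}}=z_1+i\lambda p_{_{F_1}}$, when both $z_1,z_2$ are of the special form $\alpha p_0+i\mu p_{_{F}}$, really forces the $\alpha$-components to match on the overlap containing $p_0$. Once this consistency is secured, the passage to the limit via Proposition~\ref{prop Id Princ 1} and the final identification are routine applications of the already-established identity principle.
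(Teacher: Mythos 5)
Your proposal is correct and follows essentially the same route as the paper: compress to finite-rank corners containing $p_0$, invoke Lemma \ref{l new tech one} there, use the compatibility of the implementing elements on nested corners to show the scalar $\alpha_F$ is independent of $F$, deduce uniform boundedness, and conclude via Corollary \ref{c id principle 3}. The only cosmetic differences are that the paper uses part $(b)$ of that corollary (uniform bound on $\left\| p_{_F}\Delta p_{_F}|_{p_{_F}B(H)p_{_F}}\right\|$) rather than part $(a)$, and finishes by continuity and linearity of $\Delta$ directly from the identity on the finite corners instead of passing through the weak$^*$-limit of $(\widetilde{z}_{_F})$.
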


\begin{proof} Take a finite rank projection $p\leq p_0^{\perp}$. Since $$\Delta_p = (p+p_0) \Delta (p_0+p)|_{(p_0+p)B(H) (p_0+p)}: (p_0+p)B(H)(p_0+p)\to (p_0+p)B(H) (p_0+p)$$ is a weak-2-local derivation with $\Delta_p^{\sharp} = \Delta_p$ (compare \cite[Proposition 2.7]{NiPe2014}) and $(p_0+p)B(H)(p_0+p) \cong M_m$ for a suitable $m$, we deduce from \cite[Theorem 2.12]{NiPe2015} that $\Delta_p$ is a $^*$-derivation. We also know that $\Delta_p (a) = 0$ for every $a \in (p_0+p)B(H)(p_0+p)$ with $a = p a p$. Lemma \ref{l new tech one} implies the existence of $\alpha(p)\in i\mathbb{R}$, depending on $p$, such that $\Delta_p (x) = [\alpha(p) p_0, x]$ for all $x\in (p_0+p) B(H)(p_0+p)$.\smallskip

We claim that $\alpha (p)$ doesn't depend on $p$. Indeed, let $p_1,p_2$ be finite rank projections with $p_j\leq p_0^{\perp}$. We can find a third finite rank projection $p_3\leq p_0^{\perp}$ such that $p_1,p_2\leq p_3$. We know that $\Delta_{p_j} (x) = [\alpha(p_{j}) p_0, x]$ for all $x\in (p_0+p_j) B(H)(p_0+p_j)$ for all $j=1,2,3$. Since for each $j=1,2$, $$(p_0+p_j) \Delta_{p_3} (p_0+p_j) |_{(p_0+p_j)B(H) (p_0+p_j)} = \Delta_{p_j},$$ we can easily see that $\alpha(p_j) = \alpha (p_3)$ for every $j=1,2$, which proves the claim. Therefore, there exists $\alpha \in i\mathbb{R}$ such that \begin{equation}\label{eq identity alpha p0 finites} (p+p_0) \Delta (x) (p_0+p) = [\alpha p_0, x]
 \end{equation}for all $x\in (p_0+p) B(H)(p_0+p)$ and every finite rank projection $p\leq p_0^{\perp}$. \smallskip

Let us fix $F\in \mathfrak{F} (H)$. We can find another finite rank projection $p_1\leq p_0^{\perp}$ such that $p_{_F}\leq p_0 + p_1$. We have shown that $\Delta_{p_1} = (p_0+p_1) \Delta (p_0+p_1)|_{(p_0+p_1)B(H) (p_0+p_1)} = [\alpha p_0, .]|_{(p_0+p_1)B(H) (p_0+p_1)},$ and hence $\|\Delta_{p_1}\|\leq 2 |\alpha|$. Since $ p_{_F} \Delta_{p_1} p_{_F} |_{p_{_F} B(H)p_{_F} } = p_{_F} \Delta p_{_F} |_{p_{_F} B(H)p_{_F} }$, we can also conclude that $$\left\| p_{_F} \Delta p_{_F}|_{p_{_F}B(H) p_{_F}} \right\|\leq 2 |\alpha|,$$ for every $F\in \mathfrak{F} (H)$. Corollary \ref{c id principle 3} implies that $\Delta$ is a linear $^*$-derivation. The continuity and linearity of $\Delta$ combined with \eqref{eq identity alpha p0 finites} give the desired statement.
\end{proof}

\begin{theorem}\label{t boundedness of diamzF in the orthogonal complement of a finite rank projection} Let $\Delta : B(H) \to B(H)$ be a weak-2-local derivation with $\Delta^{\sharp} = \Delta$. Suppose there exists $G\in \mathfrak{F} (H)$ such that the set $$\mathcal{D}\hbox{iam}_{G^{\perp}}^-=\left\{ \hbox{diam} (\sigma(w_{_F})) : w_F\in[z_{_F}], \ F\in \mathfrak{F} (H), \ p_{_{F}} \leq p_{_{G}}^{\perp} \right\}$$ is bounded. Then $\Delta$ is a linear $^*$-derivation.
\end{theorem}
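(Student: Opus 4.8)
The plan is to settle the one-dimensional case $\dim G=1$ directly and then to peel off the dimensions of $G$ one at a time, each peeling invoking the one-dimensional case. Assume first that the statement is known whenever the finite dimensional space is one-dimensional. Given $G$ with $\dim G=n\geq 1$, choose $G'\subset G$ with $\dim G'=n-1$ and set $e=p_{_G}-p_{_{G'}}$, a rank-one projection with $e\leq p_{_{G'}}^{\perp}$ and $p_{_{G'}}^{\perp}-e=p_{_G}^{\perp}$. By \cite[Proposition 2.7]{NiPe2014} the compression $\widetilde{\Delta}=p_{_{G'}}^{\perp}\Delta (p_{_{G'}}^{\perp}\,\cdot\,p_{_{G'}}^{\perp})p_{_{G'}}^{\perp}$ is a weak-2-local derivation on $p_{_{G'}}^{\perp}B(H)p_{_{G'}}^{\perp}\cong B(K)$ with $\widetilde{\Delta}^{\sharp}=\widetilde{\Delta}$, and for every $F\in\mathfrak{F}(H)$ with $p_{_F}\leq p_{_G}^{\perp}$ the local data of $\widetilde\Delta$ and of $\Delta$ agree; thus the hypothesis says exactly that $\widetilde\Delta$ has bounded $\mathcal{D}\hbox{iam}$ on the complement corner of the rank-one projection $e$ inside $B(K)$. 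The one-dimensional case then makes $\widetilde\Delta=[\widetilde z,\,\cdot\,]$ a linear derivation, so $\hbox{diam}(\sigma(\widetilde z))<\infty$ and, by Lemma \ref{l F1 in F2} applied inside $B(K)$, $\mathcal{D}\hbox{iam}_{(G')^{\perp}}^{-}$ is bounded. This lowers $\dim G$ by one; iterating until $G=\{0\}$ makes $\mathcal{D}\hbox{iam}(\Delta)=\mathcal{D}\hbox{iam}_{\{0\}^{\perp}}^{-}$ bounded, and Corollary \ref{c id principle 3}$(a)$ finishes the proof.

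It remains to treat $e:=p_{_G}$ of rank one, with range spanned by a unit vector $\zeta$. Applying Proposition \ref{prop Id Princ 1} and Theorem \ref{t id princ} with $p_0=e$ produces a skew $z_0\in e^{\perp}B(H)e^{\perp}$ with $e^{\perp}\Delta(e^{\perp}ae^{\perp})e^{\perp}=[z_0,e^{\perp}ae^{\perp}]$ for all $a$; replacing $\Delta$ by the weak-2-local symmetric derivation $\Delta'=\Delta-[z_0,\,\cdot\,]$ (Lemma \ref{l new basic properties 1}$(c)$) I may assume $e^{\perp}\Delta'(e^{\perp}ae^{\perp})e^{\perp}=0$ for every $a$. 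A one-line computation with the functionals $x\mapsto\langle x\zeta,\zeta\rangle$ and $x\mapsto\langle x\xi,\zeta\rangle$ ($\xi\in e^{\perp}H$) shows that, for each finite-rank projection $q\leq e^{\perp}$, the diagonal blocks $e\Delta'(q)e$ and $e^{\perp}\Delta'(q)e^{\perp}$ vanish. Since $e$ has rank one, $\Delta'(q)$ is then the self-adjoint off-diagonal operator $\xi\mapsto\langle\xi,u_q\rangle\zeta+\langle\xi,\zeta\rangle u_q$ for a unique $u_q\in\hbox{ran}(q)\subseteq e^{\perp}H$, and linearity on $\mathcal{F}(H)$ (Lemma \ref{lema NIPe finite ranks}) gives $u_q=q\,u_{q'}$ whenever $q\leq q'$, so the $\|u_q\|$ increase with $q$.

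The heart of the matter is a \emph{uniform} bound on $\|u_q\|$. Fix a finite-rank projection $q\leq e^{\perp}$ and the unit vector $\eta=u_q/\|u_q\|\in\hbox{ran}(q)$, and apply the weak-2-local property to the pair $(q,e)$ and the functional $\phi(x)=\langle x\eta,\zeta\rangle$: there is a derivation $[w,\,\cdot\,]$ with $\phi\Delta(q)=\phi[w,q]$ and $\phi\Delta(e)=\phi[w,e]$. Using $q\eta=\eta$, $q\zeta=0$, $e\eta=0$, $e\zeta=\zeta$ one finds $\phi[w,q]=\langle w\eta,\zeta\rangle$ and $\phi[w,e]=-\langle w\eta,\zeta\rangle$ \emph{for every} operator $w$, whence $\langle\Delta(e)\eta,\zeta\rangle=-\langle\Delta(q)\eta,\zeta\rangle$. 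Since $[z_0,q]\eta\perp\zeta$ we have $\langle\Delta(q)\eta,\zeta\rangle=\langle\Delta'(q)\eta,\zeta\rangle=\langle\eta,u_q\rangle=\|u_q\|$, so $\|u_q\|=|\langle\Delta(e)\eta,\zeta\rangle|\leq\|\Delta(e)\|$. Hence $\sup_q\|u_q\|\leq\|\Delta(e)\|<\infty$, the $u_q$ converge to some $u\in e^{\perp}H$, and the bounded skew operator $w_0\colon\xi\mapsto\langle\xi,u\rangle\zeta-\langle\xi,\zeta\rangle u$ satisfies $[w_0,q]=\Delta'(q)$ for every finite-rank projection $q\leq e^{\perp}$, hence $[w_0,\,\cdot\,]=\Delta'$ on all finite-rank corner elements. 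Therefore $\Delta''=\Delta'-[w_0,\,\cdot\,]$ is a weak-2-local symmetric derivation vanishing on every finite-rank element of $e^{\perp}B(H)e^{\perp}$, and Proposition \ref{p new tech one} yields $\alpha\in i\mathbb{R}$ with $\Delta''=[\alpha e,\,\cdot\,]$; thus $\Delta=[z_0+w_0+\alpha e,\,\cdot\,]$ is a linear $^*$-derivation.

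The main obstacle is precisely the uniform bound $\sup_q\|u_q\|<\infty$: the hypothesis controls only the corners sitting inside $e^{\perp}$, so a priori the off-diagonal part of $\Delta$ across $e$ could grow without bound (indeed $\mathcal{D}\hbox{iam}$ may blow up on projections containing $e$, which Proposition \ref{p unboundedness of diamzF passes to subnets} does not forbid). The cancellation $\langle\Delta(e)\eta,\zeta\rangle=-\langle\Delta(q)\eta,\zeta\rangle$, valid for an arbitrary local representative $w$, is what tames this, converting the potentially unbounded family $\{u_q\}$ into matrix entries of the single fixed operator $\Delta(e)$. A secondary point needing care is that Proposition \ref{p new tech one} is stated with vanishing on \emph{all} corner elements, whereas the construction only delivers vanishing on finite-rank corner elements; this is harmless, since the proof of that proposition only ever evaluates its hypothesis on elements $a=pap$ with $p$ a finite-rank projection.
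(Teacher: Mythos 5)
Your proposal is correct, and its skeleton --- peel one rank-one piece of $G$ off at a time, and finish the absorption step with Proposition \ref{p new tech one} and the whole proof with Corollary \ref{c id principle 3} --- matches the paper's, which likewise absorbs the rank-one projections $q_m, q_{m-1},\ldots$ of $p_{_G}$ one by one. Where you genuinely diverge is the heart of the absorption step: how the off-diagonal part of $\Delta$ across the rank-one projection $e$ is implemented and, above all, bounded. The paper fixes an orthonormal basis $\{\xi_j\}$ of $p_{_G}^{\perp}(H)$, extracts coefficients $\alpha^{j}_{0j}$ from $2\times 2$ corner derivations, proves square-summability via $\sum_{j\in J_0}|\alpha^{j}_{0j}|^2\leq \|\Delta_1(p_{_G}^{\perp})\|^2$ (using \cite[Lemma 3.2]{NiPe2015} to replace $\Delta_1(p_{_{J_0}})$ by $\Delta_1(p_{_G}^{\perp})$ inside compressions), assembles $z_1$ from these coefficients, and then extends the identity from finite-rank corner elements to the whole corner by strong$^*$-limits so that Proposition \ref{p new tech one} applies verbatim. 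You instead encode the off-diagonal data in the vectors $u_q=e^{\perp}\Delta'(q)\zeta$ and get the uniform bound $\|u_q\|\leq\|\Delta(e)\|$ by a basis-free cancellation: weak-2-locality applied to the \emph{pair} $(q,e)$ with the functional $x\mapsto\langle x\eta,\zeta\rangle$ forces $\phi[w,q]=-\phi[w,e]$ for every candidate $w$, so the a priori unbounded family $\{u_q\}$ is read off the single fixed operator $\Delta(e)$ rather than off $\Delta_1(p_{_G}^{\perp})$. This makes the use of the two-point hypothesis transparent, avoids the $\ell^2$-summability computation, and trades the paper's strong$^*$-limit extension for your (accurate) observation that the proof of Proposition \ref{p new tech one} only ever evaluates its vanishing hypothesis on elements $pap$ with $p$ a finite-rank projection. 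Two small points to tidy if you write this up: the vanishing of the block $e^{\perp}\Delta'(q)e^{\perp}$ follows from the normalization $e^{\perp}\Delta'(e^{\perp}ae^{\perp})e^{\perp}=0$ itself, not from a functional computation (only the block $e\Delta'(q)e$ and the containment $u_q\in\hbox{ran}(q)$ need functionals); and in the peeling step the boundedness of $\mathcal{D}\hbox{iam}_{(G')^{\perp}}^{-}$ is not literally Lemma \ref{l F1 in F2} (whose larger subspace is finite-dimensional) but the observation used in the paper's final paragraph: once $\widetilde\Delta=[\widetilde z,\cdot\,]$ is a bounded linear $^*$-derivation, every $w_{_F}\in[z_{_F}]$ with $p_{_F}\leq p_{_{G'}}^{\perp}$ satisfies $\hbox{diam}(\sigma(w_{_F}))=\big\|p_{_F}\widetilde\Delta p_{_F}|_{p_{_F}B(H)p_{_F}}\big\|\leq\|\widetilde\Delta\|$.
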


\begin{proof} Combining Proposition \ref{prop Id Princ 1} and Theorem \ref{t id princ} we deduce the existence of $z_0= -z_0^*$ in $(1-p_{_G}) B(H) (1-p_{_G})$ such that $$(1-p_{_G}) \Delta (a) (1-p_{_G})= [z_0,(1-p_{_G}) a(1-p_{_G}) ],$$ for every $a\in B(H)$. The mapping $\Delta_1 = \Delta -[z_0, .]$ is a weak-2-local derivation on $B(H)$ with $\Delta_1 = \Delta_1^{\sharp},$ and satisfies \begin{equation}\label{eq Delta1 annihilates orthogonal} (1-p_{_G}) \Delta_1 (a) (1-p_{_G}) =0,
 \end{equation}for all $a\in (1-p_{_G}) B(H) (1-p_{_G})$.\smallskip

Let $q_1,\ldots, q_m$ be mutually orthogonal rank one projections such that $p_{_G} = q_1+\ldots+ q_m.$ \smallskip

Let $\{\xi_{j}: j\in J\}$ be an orthonormal basis of $p_{_G}^{\perp} (H)$. For each $j\in J$ we denote by $p_j$ the rank-projection corresponding to the orthogonal projection of $H$ onto $\mathbb{C} \xi_j$. By Proposition 2.7 in \cite{NiPe2014} the mapping $(q_{m} + p_j) \Delta_1 (q_{m} + p_j)|_{_{(q_{m} + p_j) B(H) (q_{m} + p_j)}}$  is a linear $^*$-derivation on $(q_{m} + p_j) B(H) (q_{m} + p_j)$. Therefore, there exists $\displaystyle z_j = \left(
                                 \begin{array}{cc}
                                   \alpha^{j}_{00} & \alpha^{j}_{0j} \\
                                   - \overline{\alpha^{j}_{0j}} & \alpha^{j}_{jj} \\
                                 \end{array}
                               \right)
= - z_j^*\in M_2 (\mathbb{C})$ such that $(q_{m} + p_j) \Delta_1 (q_{m} + p_j) (a) = [z_j , a],$ for every $a\in (q_{m} + p_j) B(H) (q_{m} + p_j).$ We deduce from \eqref{eq Delta1 annihilates orthogonal} that $\alpha^{j}_{jj} =0$ (for every $j$). We have thus defined a family $(\alpha^{j}_{0j})\subset\mathbb{C}$.\smallskip

The same arguments give above show, via \cite[Proposition 2.7]{NiPe2014} and \eqref{eq Delta1 annihilates orthogonal}, that for each finite subset $J_0\subset J$, with $k_0=\sharp J_0$,  and $p_{_{J_0}}= \sum_{j\in J_0} p_{j}$ that \begin{equation}\label{eq finite dime cuts} (q_{m} + p_{_{J_0}}) \Delta_1 (a) (q_{m} + p_{_{J_0}}) = [z_{_{J_0}} , a],
 \end{equation}for all $a\in (q_{m} + p_{_{J_0}}) B(H) (q_{m} + p_{_{J_0}}),$ where $z_{_{J_0}}$ identifies with the $(k_0+1)\times (k_0+1)$ skew symmetric matrix given by $z_{_{J_0}} =\alpha_{00} q_{k_{0}} + \sum_{j\in J_0} \alpha^{j}_{0j} e_{0j} - \overline{\alpha^{j}_{0j}} e_{0j}^*,$ where $e_{0j}$ is the unique minimal partial isometry satisfying $e_{0j} e_{0j}^* = q_{m}$ and $e_{0j}^* e_{0j} = p_j,$ and $\alpha_{00}$ is a suitable complex number.\smallskip

We claim that the family $\sum_{j\in J} |\alpha^{j}_{0j}|^2$ is summable. Indeed, for each finite subset $J_0\subset J$, we can show from \eqref{eq finite dime cuts} and \cite[Lemma 3.2]{NiPe2015} that $$ \sum_{j\in J_0} \alpha^{j}_{0j} e_{0j} + \overline{\alpha^{j}_{0j}} e_{0j}^* = (q_{m} + p_{_{J_0}}) \Delta_1(p_{_{J_0}}) (q_{m} + p_{_{J_0}})= (q_{m} + p_{_{J_0}}) \Delta_1(p_{_G}^{\perp}) (q_{m} + p_{_{J_0}}),$$ and hence $$ \sum_{j\in J_0} |\alpha^{j}_{0j}|^2  = \| (q_{m} + p_{_{J_0}}) \Delta_1(p_{_{J_0}}) (q_{m} + p_{_{J_0}}) \|^2 \leq \|  \Delta_1(p_{_G}^{\perp}) \|^2,$$ which assures the boundedness of the set $\{ \sum_{j\in J} |\alpha^{j}_{0j}|^2 : J_0\subset J \hbox{ finite }\}$ and proves the claim.\smallskip

Thanks to the claim, the element $z_1 = \sum_{j\in J} \alpha^{j}_{0j} e_{0j} - \overline{\alpha^{j}_{0j}} e_{0j}^*$ is a well-defined skew symmetric element in $B(H)$. We further know, from \eqref{eq finite dime cuts}, that \begin{equation}\label{eq finite dime cuts 2} (q_{m} + p_{_{J_0}}) \Delta_1  (a) (q_{m} + p_{_{J_0}}) = (q_{m} + p_{_{J_0}}) [z_{1} , a](q_{m} + p_{_{J_0}}),
\end{equation} for every finite subset $J_0\subset J$, $p_{_{J_0}}= \sum_{j\in J_0} p_{j}$, and every element $a$ in $ p_{_{J_0}} B(H)  p_{_{J_0}}.$ In the case $a= p_{_{J_0}})$ we get $$ (q_{m} + p_{_{J_0}}) \Delta_1  (p_{_{J_0}}) (q_{m} + p_{_{J_0}}) = (q_{m} + p_{_{J_0}}) [z_{1} , p_{_{J_0}} ](q_{m} + p_{_{J_0}}).$$ Lemma 3.2 in \cite{NiPe2015} implies that $$(q_{m} + p_{_{J_0}}) \Delta_1  (p_{_{G}}^{\perp}) (q_{m} + p_{_{J_0}}) = (q_{m} + p_{_{J_0}}) \Delta_1  (p_{_{J_0}} + (p_{_G}^\perp-p_{_{J_0}})) (q_{m} + p_{_{J_0}})$$ $$= (q_{m} + p_{_{J_0}}) \Delta_1  (p_{_{J_0}} ) (q_{m} + p_{_{J_0}}) = (q_{m} + p_{_{J_0}}) [z_{1} , p_{_{J_0}} ](q_{m} + p_{_{J_0}}) $$ $$= (q_{m} + p_{_{J_0}}) [z_{1} , p_{_G}^{\perp} ](q_{m} + p_{_{J_0}}).$$ Letting $p_{_{J_0}} \nearrow p_{_G}^{\perp}$ in the strong$^*$-topology, we get $$ (q_{m} + p_{_G}^{\perp}) \Delta_1  (p_{_{G}}^{\perp}) (q_{m} + p_{_G}^{\perp}) =  (q_{m} + p_{_G}^{\perp}) [z_{1} , p_{_G}^{\perp} ] (q_{m} + p_{_G}^{\perp}) = \widehat{z}_1 = \sum_{j\in J} \alpha^{j}_{0j} e_{0j} + \overline{\alpha^{j}_{0j}} e_{0j}^*.$$ Clearly, $z_1 = q_{m} \widehat{z}_1 p_{_G}^{\perp} - p_{_G}^{\perp} \widehat{z}_1 q_{m}$. Let $p\leq p_{_G}^{\perp}$ be a finite rank projection. We deduce from the last identity that $$q_{m} [{z}_1,p] p =q_{m} \widehat{z}_1 p =  q_{m} \Delta_1 (p_{_G}^{\perp}) p = q_{m} \Delta_1 (p + (p_{_G}^{\perp}-p)) p =  q_{m} \Delta_1 (p) p,$$ where the last equality follows from \cite[Lemma 3.2]{NiPe2015}. We similarly prove $p [{z}_1,p] q_{m}= - p \widehat{z}_1 q_{m} =  p \Delta_1 (p) q_{m}$, and hence, by \eqref{eq Delta1 annihilates orthogonal}, $$ ( q_{m} +p_{_G}^{\perp}) \Delta_1 (p) ( q_{m} +p_{_G}^{\perp}) = ( q_{m} +p_{_G}^{\perp}) [z_1, p] ( q_{m} +p_{_G}^{\perp}).$$

Now, Proposition 3.1 in \cite{NiPe2015} shows that $\Delta_1$ is linear on $\mathcal{F} (H)$. We thus deduce from the above that \begin{equation}\label{eq q0pGperp nanihilates on pGperp}  ( q_{m} +p_{_G}^{\perp}) \Delta_1 (a) ( q_{m} +p_{_G}^{\perp}) = ( q_{m} +p_{_G}^{\perp}) [z_1, a] ( q_{m} +p_{_G}^{\perp}),
\end{equation} for every $a\in p_{_G}^{\perp} \mathcal{F}(H)p_{_G}^{\perp}.$\smallskip

We claim now that $$(q_{m} + p_{_G}^{\perp}) \Delta_1  (a) (q_{m} + p_{_G}^{\perp}) = (q_{m} + p_{_G}^{\perp}) [z_{1} , a](q_{m} + p_{_G}^{\perp}),$$ for every element $a$ in $ p_{_G}^{\perp} B(H) p_{_G}^{\perp}.$ For this purpose, let us fix $a\in p_{_G}^{\perp} B(H) p_{_G}^{\perp},$ and a projection $p_{_{J_0}},$ with $J_0$ a finite subset of $J$. Having in mind that $(q_{m} + p_{_{J_0}}) a + (q_{m} + p_{_{J_0}})^{\perp} a (q_{m} + p_{_{J_0}})\in p_{_G}^{\perp} \mathcal{F}(H)p_{_G}^{\perp}$, a new application of \cite[Lemma 3.2]{NiPe2015} proves that
$$( q_{m} +p_{_G}^{\perp}) [z_1, a] ( q_{m} +p_{_G}^{\perp})\!= \!( q_{m} +p_{_G}^{\perp}) [z_1, (q_{m} + p_{_{J_0}}) a + (q_{m} + p_{_{J_0}})^{\perp} a (q_{m} + p_{_{J_0}})] ( q_{m} +p_{_G}^{\perp})$$ $$=(q_{m} + p_{_{J_0}}) \Delta_1  ((q_{m} + p_{_{J_0}}) a + (q_{m} + p_{_{J_0}})^{\perp} a (q_{m} + p_{_{J_0}})) (q_{m} + p_{_{J_0}}) $$
$$=(q_{m} + p_{_{J_0}}) \Delta_1  (a) (q_{m} + p_{_{J_0}}) .$$ If in the previous identity we let  $p_{_{J_0}} \nearrow p_{_G}^{\perp}$ in the strong$^*$-topology we obtain the equality stated in the claim.\smallskip

The mapping $(q_{m} + p_{_G}^{\perp}) \Delta_1   ( q_{m} + p_{_G}^{\perp})|_{(q_{m} + p_{_G}^{\perp})B(H)(q_{m} + p_{_G}^{\perp})}$ is a weak-2-local derivation on $(q_{m} + p_{_G}^{\perp})B(H)(q_{m} + p_{_G}^{\perp})$ (see \cite[Proposition 2.7]{NiPe2014}). We know from \eqref{eq q0pGperp nanihilates on pGperp} that $(q_{m} + p_{_G}^{\perp}) \Delta_1 (a)  ( q_{m} + p_{_G}^{\perp}) = (q_{m} + p_{_G}^{\perp}) [z_{1} , a](q_{m} + p_{_G}^{\perp}),$ for every element $a$ in $ p_{_G}^{\perp} B(H) p_{_G}^{\perp}.$ We set $$\Delta_2 = (q_{m} + p_{_G}^{\perp}) \Delta_1   ( q_{m} + p_{_G}^{\perp})|_{(q_{m} + p_{_G}^{\perp})B(H)(q_{m} + p_{_G}^{\perp})} - (q_{m} + p_{_G}^{\perp}) [z_{1} , ](q_{m} + p_{_G}^{\perp}).$$ Then $\Delta_2$ is a weak-2-local derivation on $(q_{m} + p_{_G}^{\perp})B(H)(q_{m} + p_{_G}^{\perp})$ and $\Delta_2 (a) =0$ for every $a\in p_{_G}^{\perp} B(H) p_{_G}^{\perp}.$  Proposition \ref{p new tech one} proves that $\Delta_2$ is a linear $^*$-derivation on $(q_{m} + p_{_G}^{\perp})B(H)(q_{m} + p_{_G}^{\perp})$, which implies the same conclusion for the mapping $(q_{m} + p_{_G}^{\perp}) \Delta   ( q_{m} + p_{_G}^{\perp})|_{(q_{m} + p_{_G}^{\perp})B(H)(q_{m} + p_{_G}^{\perp})}$.\smallskip

If we set $\displaystyle G_1= \left(\sum_{j=1}^{m-1} q_{j}\right) (H) \subsetneq G$, we conclude that the set $$\mathcal{D}\hbox{iam}_{G_1^{\perp}}^-=\left\{ \hbox{diam} (\sigma(w_{_F})) : w_F\in[z_{_F}], \ F\in \mathfrak{F} (H), \ p_{_{F}} \leq p_{_{G_1}}^{\perp} \right\}$$ is bounded (just apply that $p_{_{G_1}}^{\perp} \Delta   p_{_{G_1}}^{\perp} |_{p_{_{G_1}}^{\perp}B(H)p_{_{G_1}}^{\perp}}$ is a bounded linear $^*$-derivation). If we apply the above reasoning to $G_1$, $p_{m-1}$, and $\Delta$, we deduce that $$(q_{m-1} + p_{_{G_1}}^{\perp}) \Delta   ( q_{m-1} + p_{_{G_1}}^{\perp})|_{(q_{m-1} + p_{_{G_1}}^{\perp})B(H)(q_{m-1} + p_{_{G_1}}^{\perp})}$$ is a bounded linear $^*$-derivation. Repeating these arguments a finite number of steps we prove that $\Delta$ is a bounded linear $^*$-derivation.
\end{proof}

The key technical result needed in our arguments follows now as a direct consequence of the preceding proposition.

\begin{corollary}\label{prop unboundedness in a sequences of mutually orthognal finite dimensional subsets} Let $\Delta : B(H) \to B(H)$ be a weak-2-local derivation with $\Delta^{\sharp} = \Delta$. Suppose that the set $\mathcal{D}\hbox{iam} (\Delta)=\left\{ \hbox{diam} (\sigma(w_{_F})) : w_F\in[z_{_F}], \ F\in \mathfrak{F} (H) \right\}$ is unbounded. Then there exists a sequence $(F_n)\subset\mathfrak{F} (H)$ such that $p_{_{F_n}} \perp p_{_{F_m}}$ for every $n\neq m$, and $\displaystyle\hbox{diam} (\sigma(\widetilde{z}_{_{F_n}})) \geq 4^n$ for every natural $n$.
\end{corollary}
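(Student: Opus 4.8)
The plan is to deduce the statement from Theorem~\ref{t boundedness of diamzF in the orthogonal complement of a finite rank projection} together with a routine orthogonal exhaustion argument. The crucial preliminary observation is that the unboundedness of $\mathcal{D}\hbox{iam}(\Delta)$ propagates to the orthogonal complement of \emph{every} finite-dimensional subspace: that is, $\mathcal{D}\hbox{iam}_{G^{\perp}}^-$ is unbounded for every $G\in\mathfrak{F}(H)$. To see this I would argue by contradiction. If $\mathcal{D}\hbox{iam}_{G^{\perp}}^-$ were bounded for some $G$, then Theorem~\ref{t boundedness of diamzF in the orthogonal complement of a finite rank projection} would force $\Delta$ to be a (continuous) linear $^*$-derivation, hence inner by Sakai's theorem, say $\Delta = [z_0,.]$ with $z_0=-z_0^*$ and $\|z_0\|\leq\|\Delta\|$. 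For each $F\in\mathfrak{F}(H)$ one then computes $p_{_F}\Delta(p_{_F}ap_{_F})p_{_F} = [p_{_F}z_0 p_{_F},\, p_{_F}ap_{_F}]$, so that the skew element $p_{_F}z_0 p_{_F}$ lies in $[z_{_F}]$. By Remark~\ref{remark Stamfli and norms of derivations} and Lemma~\ref{l F1 in F2} this gives $\hbox{diam}(\sigma(\widetilde{z}_{_F})) = \hbox{diam}(\sigma(p_{_F}z_0 p_{_F}))\leq 2\|z_0\|\leq 2\|\Delta\|$ for all $F$, making $\mathcal{D}\hbox{iam}(\Delta)=\mathcal{D}\hbox{iam}_{\{0\}^{\perp}}^-$ bounded and contradicting the hypothesis.

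Once this is in hand, I would construct the sequence by induction. Lemma~\ref{l F1 in F2} shows that $\hbox{diam}(\sigma(w_{_F}))$ does not depend on the representative $w_{_F}\in[z_{_F}]$, so it coincides with $\hbox{diam}(\sigma(\widetilde{z}_{_F}))$; hence the unboundedness of each $\mathcal{D}\hbox{iam}_{G^{\perp}}^-$ simply says that $\{\hbox{diam}(\sigma(\widetilde{z}_{_F})) : F\in\mathfrak{F}(H),\ p_{_F}\leq p_{_G}^{\perp}\}$ is unbounded. For the base step, the unboundedness of $\mathcal{D}\hbox{iam}(\Delta)$ supplies $F_1\in\mathfrak{F}(H)$ with $\hbox{diam}(\sigma(\widetilde{z}_{_{F_1}}))\geq 4$. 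Assuming mutually orthogonal $F_1,\ldots,F_n$ have been produced with the required diameter bounds, I set $G_n=F_1\oplus\cdots\oplus F_n\in\mathfrak{F}(H)$ and invoke the unboundedness of $\mathcal{D}\hbox{iam}_{G_n^{\perp}}^-$ to choose $F_{n+1}\in\mathfrak{F}(H)$ with $p_{_{F_{n+1}}}\leq p_{_{G_n}}^{\perp}$ and $\hbox{diam}(\sigma(\widetilde{z}_{_{F_{n+1}}}))\geq 4^{n+1}$. The condition $p_{_{F_{n+1}}}\leq p_{_{G_n}}^{\perp}$ automatically yields $p_{_{F_{n+1}}}\perp p_{_{F_k}}$ for all $k\leq n$, so orthogonality is preserved and the induction closes.

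The construction itself is elementary, so the only delicate point -- and the step I expect to carry the whole argument -- is the reduction in the first paragraph, namely recognizing that Theorem~\ref{t boundedness of diamzF in the orthogonal complement of a finite rank projection} upgrades the global unboundedness of $\mathcal{D}\hbox{iam}(\Delta)$ into local unboundedness relative to every $G\in\mathfrak{F}(H)$. This rests on the short verification that a putative inner $^*$-derivation $[z_0,.]$ would force uniformly bounded spectral diameters, which uses only Sakai's innerness and the norm formula of Remark~\ref{remark Stamfli and norms of derivations}. Everything after that is bookkeeping with orthogonal finite-rank projections.
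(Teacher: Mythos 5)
Your proof is correct and follows the paper's own argument essentially verbatim: both reduce to Theorem \ref{t boundedness of diamzF in the orthogonal complement of a finite rank projection} to show that unboundedness of $\mathcal{D}\hbox{iam}(\Delta)$ forces $\mathcal{D}\hbox{iam}_{G^{\perp}}^-$ to be unbounded for every $G\in\mathfrak{F}(H)$, and then run the same orthogonal-exhaustion induction with $G_n=F_1\oplus\cdots\oplus F_n$. The only cosmetic difference is that you obtain the intermediate contradiction via Sakai's innerness and the compressions $p_{_F} z_0 p_{_F}\in[z_{_F}]$, whereas the paper reads it off directly from the identity $\hbox{diam}(\sigma(w_{_F}))=\left\|p_{_F}\Delta p_{_F}|_{p_{_F}B(H)p_{_F}}\right\|\leq \|\Delta\|$, which holds once $\Delta$ is known to be a bounded linear derivation.
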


\begin{proof} If there exists $G\in \mathfrak{F} (H)$ such that $$\mathcal{D}\hbox{iam}_{G^{\perp}}^-=\left\{ \hbox{diam} (\sigma(w_{_F})) : w_F\in[z_{_F}], \ F\in \mathfrak{F} (H), \ p_{_{F}} \leq p_{_{G}}^{\perp} \right\}$$ is bounded, then Theorem \ref{t boundedness of diamzF in the orthogonal complement of a finite rank projection} implies that $\Delta$ is a linear $^*$-derivation, which contradicts the unboundedness of the set $$\mathcal{D}\hbox{iam} (\Delta)=\left\{ \hbox{diam} (\sigma(w_{_F})) = \left\| p_{_F} \Delta p_{_F}|_{p_{_F}B(H) p_{_F}} \right\| : w_F\in[z_{_F}], \ F\in \mathfrak{F} (H) \right\}.$$ We can therefore assume that $\mathcal{D}\hbox{iam}_{G^{\perp}}^-$ is unbounded for every $G\in \mathfrak{F} (H)$.\smallskip

We shall argue by induction. Let us fix $F_1 \in \mathfrak{F} (H)$ with $\displaystyle\hbox{diam} (\sigma(\widetilde{z}_{_{F_1}})) \geq 4.$ In the notation employed before, the set $\mathcal{D}\hbox{iam}_{{F_1}^{\perp}}^-$ is unbounded. The mapping $ p_{_{F_1}}^{\perp} \Delta p_{_{F_1}}^{\perp}|_{p_{_{F_1}}^{\perp} B(H) p_{_{F_1}}^{\perp}} : p_{_{F_1}}^{\perp} B(H) p_{_{F_1}}^{\perp} \to p_{_{F_1}}^{\perp} B(H) p_{_{F_1}}^{\perp}$ is a weak-2-local derivation and a symmetric mapping (compare \cite[Proposition 2.7]{NiPe2014}). Therefore the set $\mathcal{D}\hbox{iam} (p_{_{F_1}}^{\perp} \Delta p_{_{F_1}}^{\perp}|_{p_{_{F_1}}^{\perp} B(H) p_{_{F_1}}^{\perp}})$ must be unbounded. We can find $F_2\in \mathfrak{F} (H)$ with $p_{_{F_2}}\perp p_{_{F_1}}$ and $\displaystyle\hbox{diam} (\sigma(\widetilde{z}_{_{F_2}})) \geq 4^2.$\smallskip

Suppose we have defined $F_1,\ldots, F_n$ satisfying the desired conditions. Set $K_n:=F_1 \oplus^{\ell_2}\ldots \oplus^{\ell_2} F_n\in \mathfrak{F}(H)$. According to the arguments at the beginning of the proof, $\mathcal{D}\hbox{iam}_{{K_n}^{\perp}}^-$ is unbounded. Therefore, we can find $F_{n+1} \in \mathfrak{F}(H)$ such that $p_{_{F_{n+1}}} \perp p_{_{F_j}}$ for every $j=1,\ldots, n$ and $\displaystyle\hbox{diam} (\sigma(\widetilde{z}_{_{F_{n+1}}})) \geq 4^{n+1}.$
\end{proof}

We shall show next that every weak-2-local derivation on $B(H)$ is bounded on the lattice of projections of $B(H)$.

\begin{theorem}\label{thm boundedness of ztildeF} Let $\Delta : B(H) \to B(H)$ be a weak-2-local derivation with $\Delta^{\sharp} = \Delta$. Then the following statements hold:
\begin{enumerate}[$(a)$] \item The set $\mathcal{D}\hbox{iam} (\Delta)=\left\{ \hbox{diam} (\sigma(w_{_F})) : w_F\in[z_{_F}], \ F\in \mathfrak{F} (H) \right\}$ is bounded;
\item The set $\left\{ \|\widetilde{z}_{_F}\| : \ F\in \mathfrak{F} (H) \right\}$ is bounded;
\end{enumerate}
Consequently, by Alaoglu's theorem, we can find $z_0\in B(H)$ with $z_0=-z_0^*$ and a subnet $(\widetilde{z}_{_F})_{F\in \Lambda}$ of $(\widetilde{z}_{_F})_{F\in \mathfrak{F} (H)}$ converging to $z_0$ in the weak$^*$-topology of $B(H)$.
\end{theorem}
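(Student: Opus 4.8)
The plan is to establish $(a)$ by contradiction, to read off $(b)$ from $(a)$ at once, and to obtain the closing assertion from Alaoglu's theorem together with the weak$^*$-closedness of the skew-symmetric ball. For $(a)$, I would assume that $\mathcal{D}\hbox{iam}(\Delta)$ is unbounded and invoke Corollary \ref{prop unboundedness in a sequences of mutually orthognal finite dimensional subsets} to produce a sequence $(F_n)\subset \mathfrak{F}(H)$ with $p_{_{F_n}}\perp p_{_{F_m}}$ for $n\neq m$ and $\hbox{diam}(\sigma(\widetilde{z}_{_{F_n}}))\geq 4^n$. By Remark \ref{remark Stamfli and norms of derivations} we have $\hbox{diam}(\sigma(\widetilde{z}_{_{F_n}}))=\big\|[\widetilde{z}_{_{F_n}},.]|_{p_{_{F_n}}B(H)p_{_{F_n}}}\big\|$, so by the definition of the operator norm I can select a local witness $a_n\in p_{_{F_n}}B(H)p_{_{F_n}}$ with $\|a_n\|\leq 1$ and $\|[\widetilde{z}_{_{F_n}},a_n]\|\geq \tfrac{1}{2}4^n$. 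The idea is then to glue these witnesses into a single operator while damping their norms, by setting
$$x:=\sum_{n} 2^{-n} a_n.$$
Since the $a_n$ live in the mutually orthogonal corners $p_{_{F_n}}B(H)p_{_{F_n}}$ and $\|a_n\|\leq 1$, this series converges in norm to a compact operator with $\|x\|\leq \tfrac12$, and $x$ is block-diagonal with respect to the family $\{p_{_{F_n}}\}$.

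The crux is the computation of $p_{_{F_n}}\Delta(x)p_{_{F_n}}$. As $x=p_{_{F_n}}x\,p_{_{F_n}}+p_{_{F_n}}^{\perp}x\,p_{_{F_n}}^{\perp}$ with $p_{_{F_n}}x\,p_{_{F_n}}=2^{-n}a_n$, the cutting identity \cite[Lemma 3.2]{NiPe2015} (employed repeatedly above) collapses to $p_{_{F_n}}\Delta(x)p_{_{F_n}}=p_{_{F_n}}\Delta(2^{-n}a_n)p_{_{F_n}}$, with no cross terms. Combining the $1$-homogeneity of $\Delta$ (Lemma \ref{l new basic properties 1}$(a)$) with Lemma \ref{l zF} and Remark \ref{remark [zF]}, this equals $2^{-n}[\widetilde{z}_{_{F_n}},a_n]$. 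Consequently
$$\|\Delta(x)\|\geq \|p_{_{F_n}}\Delta(x)p_{_{F_n}}\|=2^{-n}\|[\widetilde{z}_{_{F_n}},a_n]\|\geq 2^{-n}\cdot\tfrac{1}{2}\,4^n=\tfrac{1}{2}\,2^{n}$$
for every $n$, which forces $\Delta(x)\notin B(H)$ and yields the desired contradiction. Note that the exponential rate $4^n$ furnished by the corollary is precisely what is needed to overpower the damping factors $2^{-n}$; any summable weights would do provided their product with the growth rate diverges.

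Statement $(b)$ is then immediate: by the normalization of $\widetilde{z}_{_F}$ recorded just before Proposition \ref{prop Id Princ 1} we have $\|\widetilde{z}_{_F}\|\leq \hbox{diam}(\sigma(\widetilde{z}_{_F}))$, so the uniform bound on $\mathcal{D}\hbox{iam}(\Delta)$ from $(a)$ bounds $\{\|\widetilde{z}_{_F}\|:F\in\mathfrak{F}(H)\}$. For the final assertion, the net $(\widetilde{z}_{_F})$ lies in a norm-bounded subset of $B(H)$, which is weak$^*$-compact by Alaoglu's theorem; extracting a weak$^*$-convergent subnet with limit $z_0$ and using that the adjoint is weak$^*$-continuous (so that the set $\{z:z=-z^*\}$ is weak$^*$-closed) gives $z_0=-z_0^*$, exactly as in the paragraph preceding Proposition \ref{prop Id Princ 1}.

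Conceptually, the main obstacle is recognizing that the full two-variable weak-$2$-local condition is not needed at this stage: all the required rigidity has already been packaged into the cutting identity and into the engineered growth rate of Corollary \ref{prop unboundedness in a sequences of mutually orthognal finite dimensional subsets}. The only points that genuinely demand care are verifying that $x$ is a bona fide bounded (indeed compact) operator and that it is block-diagonal for each $p_{_{F_n}}$, so that the cutting identity reduces cleanly to $p_{_{F_n}}\Delta(x)p_{_{F_n}}=2^{-n}[\widetilde{z}_{_{F_n}},a_n]$ and the local growth survives the compression.
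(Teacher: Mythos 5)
Your proof is correct, and while it shares the paper's skeleton (contradiction via Corollary \ref{prop unboundedness in a sequences of mutually orthognal finite dimensional subsets}, then the $4^n$-versus-$2^{-n}$ race, then part $(b)$ and Alaoglu exactly as in the paper), the gluing step in $(a)$ takes a genuinely different route. The paper dampens the \emph{functional} rather than the element: it diagonalizes each $\widetilde{z}_{F_n}$, extracts extreme eigenvectors $\xi_{2n-1},\xi_{2n}$, forms the undamped sum $a_0=\sum_n e_n$ of rank-two partial isometries (which converges only in the strong$^*$ topology and is not compact), and tests $\Delta(a_0)$ against $\phi_0=\sum_k 2^{-k}\omega_{\xi_{2k-1},\xi_{2k}}$, accumulating the divergent sum $\sum_k 2^k$; this also forces an appeal to Lemma \ref{l F1 in F2} to match the compression of $\widetilde{z}_{K_n}$, $K_n=F_1\oplus\cdots\oplus F_n$, with the individual eigenvalue data. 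You instead dampen the \emph{element}: $x=\sum_n 2^{-n}a_n$ converges in norm, lies in $K(H)$, and the contradiction is read off from a single corner, $\|\Delta(x)\|\geq\|p_{F_n}\Delta(x)p_{F_n}\|=2^{-n}\|[\widetilde{z}_{F_n},a_n]\|\geq 2^{n-1}$, using only the cutting identity \cite[Lemma 3.2]{NiPe2015}, $1$-homogeneity, and Lemma \ref{l zF} with Remark \ref{remark [zF]} --- no spectral decomposition, no auxiliary functional, no strong$^*$ limits, and no Lemma \ref{l F1 in F2}. (Your citation of Remark \ref{remark Stamfli and norms of derivations} for $\hbox{diam}(\sigma(\widetilde{z}_{F_n}))=\|[\widetilde{z}_{F_n},.]|_{p_{F_n}B(H)p_{F_n}}\|$ is the same identity the paper itself uses in Corollary \ref{c id principle 3}, so that step is consonant with the paper's conventions; only the phrasing ``$\Delta(x)\notin B(H)$'' should read ``no fixed bounded operator can have norm $\geq 2^{n-1}$ for all $n$.'') What your approach buys is uniformity and economy: since your witness is compact, the identical argument yields Theorem \ref{thm boundedness of ztildeF compact} for $K(H)$ verbatim, whereas the paper must separately replace its witness by $\sum_n (2/3)^n e_n$ to land in $K(H)$; in effect, you have promoted the paper's own $K(H)$-adaptation to a single proof covering both cases. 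What the paper's construction buys is explicit spectral information (the eigenvalue gap realized on concrete rank-two isometries), which is not needed for the conclusion.
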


\begin{proof} $(a)$ Arguing by contradiction, we suppose that $\mathcal{D}\hbox{iam} (\Delta) $ is unbounded. By Corollary \ref{prop unboundedness in a sequences of mutually orthognal finite dimensional subsets}, there exists a sequence $(F_n)\subset\mathfrak{F} (H)$ such that $p_{_{F_n}} \perp p_{_{F_m}}$ for every $n\neq m$, and $\displaystyle\hbox{diam} (\sigma(\widetilde{z}_{_{F_n}})) \geq 4^n$ for every natural $n$. We can pick a sequence of mutually orthogonal rank one projections $(p_k)\subseteq B(H)$ satisfying $p_{2n-1},p_{2n}\leq p_{_{F_n}}$, $\widetilde{z}_{_{F_n}} = i \lambda_{2n-1} p_{2n-1} + i \lambda_{2n} p_{2n} + (p_{_{F_n}} -p_{2n-1} -p_{2n}) \widetilde{z}_{_{F_n}} (p_{_{F_n}} -p_{2n-1} -p_{2n})$ ($\lambda_{2n-1},\lambda_{2n}\in \mathbb{R}$), and $|\lambda_{2n-1} - \lambda_{2n}|= \lambda_{2n-1} - \lambda_{2n} = \hbox{diam} (\sigma(\widetilde{z}_{_{F_n}})) \geq 4^n$.\smallskip

Let $e_n$ be the unique rank-2 partial isometry in $B(H)$ defined by $e_{n} = \xi_{2n} \otimes \xi_{2n-1} +\xi_{2n-1}\otimes \xi_{2n}$, where $\xi_{2n}$ and $\xi_{2n-1}$ are norm one vectors in $p_{2n} (H)$ and $p_{2n-1}(H)$, respectively. Since $e_n\perp e_m$, for every $n\neq m$, the series $\displaystyle \sum_{n=1}^{\infty} e_{n}$ converges to an element $a_0\in B(H)$.  Set $\displaystyle s_{2n}:= \sum_{k=1}^{2n} p_k\leq  p_{_{K_n}}$, where $\displaystyle K_n =\bigoplus_{k=1}^{n} F_{k}$.  Clearly, $a_0 = s_{2n} a_0 s_{2n} +  s_{2n}^{\perp} a_0 s_{2n}^{\perp}$. Applying the properties of $\widetilde{z}_{_{F_n}}$ (compare Lemma \ref{l zF}, Remark \ref{remark [zF]} and Lemma \ref{l F1 in F2}) and \cite[Lemma 3.2]{NiPe2015} we have $$s_{2n} \Delta (a_0) s_{2n} = s_{2n} \Delta (s_{2n}a_0 s_{2n}) s_{2n} = s_{2n} [\widetilde{z}_{_{K_n}} ,s_{2n}a_0 s_{2n}] s_{2n} $$ $$= \left[i \sum_{k=1}^{n} \lambda_{2k-1} p_{2k-1} + \lambda_{2k} p_{2k}, s_{2n}a_0 s_{2n}\right].$$ \smallskip

Let us consider consider the functional $\phi_{0}= \sum_{k=1}^{n} \frac{1}{2^{k}} \omega_{\xi_{2k-1},\xi_{2k}}$, where, following the standard notation,  $\omega_{\xi_{2k-1},\xi_{2k}} (a) = \langle \xi_{2k-1}, a(\xi_{2k}) \rangle$ ($a\in B(H)$). We deduce from the above that $\|\phi_0\|\leq 1$ and  $$ \|\Delta(a_0)\| \geq \left| \phi_0 (s_{2n} \Delta (a_0) s_{2n}) \right| = \sum_{k=1}^{n} \frac{1}{2^{k}} (\lambda_{2k-1} - \lambda_{2k}) $$ $$= \sum_{k=1}^{n} \frac{1}{2^{k}} |\lambda_{2k-1} - \lambda_{2k}| > \sum_{k=1}^{n} \frac{1}{2^{k}} 4^{k} = \sum_{k=1}^{n} 2^{k},$$ which is impossible.\smallskip

$(b)$ Take $F\in \mathfrak{F} (H)$ and any $z\in [z_{_F}]$. If we choose $i \lambda\in \sigma (z_{_F})$, the inequalities $$\| \widetilde{z}_{_F}\| \leq \| z - i \lambda p_{_F}\| \leq \hbox{diam}(\sigma(z - i \lambda p_{_F}))= \hbox{diam}(\sigma(z)) = \hbox{diam}(\sigma(\widetilde{z}_{_F} )),$$ hold
because $0\in\sigma(z - i \lambda p_{_F})$ and $(z - i \lambda p_{_F})^*=- (z - i \lambda p_{_F})$. Finally, the desired conclusion follows from statement $(a)$.
\end{proof}

We can provide now a positive answer to Problem \ref{problem 2 new} in the case $A= B(H)$.\smallskip

\begin{theorem}\label{t weak2local derivations on BH which are symmetric are derivations}
Let $\Delta : B(H) \to B(H)$ be a weak-2-local derivation with $\Delta^{\sharp} = \Delta$. Then $\Delta$ is a linear $^*$-derivation.
\end{theorem}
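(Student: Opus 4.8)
The plan is to obtain this theorem as an immediate corollary of the boundedness principle just established, fed into the identity principle of the previous subsection. Concretely, the only thing that needs to be verified to trigger the machinery is that the diameters $\mathrm{diam}(\sigma(w_{_F}))$ stay uniformly bounded as $F$ ranges over $\mathfrak{F}(H)$, and this is exactly the content of Theorem \ref{thm boundedness of ztildeF}$(a)$, which holds for \emph{every} weak-2-local derivation $\Delta$ with $\Delta^\sharp=\Delta$ with no extra hypotheses. So first I would simply record that, by Theorem \ref{thm boundedness of ztildeF}$(a)$, the set $\mathcal{D}\mathrm{iam}(\Delta)$ is bounded.

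Once boundedness is in hand, I would invoke Corollary \ref{c id principle 3}$(a)$, whose hypothesis is precisely the boundedness of $\mathcal{D}\mathrm{iam}(\Delta)$, to conclude that $\Delta$ is a linear derivation. Unwinding what that corollary produces via Proposition \ref{prop Id Princ 1} (second statement, the case $G=\{0\}$) and Theorem \ref{t id princ} (with $p_0=0$), the derivation is implemented by the weak$^*$-limit $z_0\in B(H)$ furnished by Theorem \ref{thm boundedness of ztildeF}, namely $\Delta=[z_0,\cdot]$. The crucial bookkeeping point is that $z_0$ was built as a weak$^*$-limit of the skew-symmetric net $(\widetilde{z}_{_F})$, so $z_0=-z_0^*$. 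Hence the inner derivation $[z_0,\cdot]$ is symmetric, i.e. a $^*$-derivation, which upgrades the conclusion of Corollary \ref{c id principle 3} from ``linear derivation'' to ``linear $^*$-derivation'' and gives the statement.

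I do not expect any genuine obstacle at this final stage: all of the real work has already been front-loaded into Theorem \ref{thm boundedness of ztildeF}, whose proof of part $(a)$ is the delicate argument (the contradiction built from a sequence of mutually orthogonal finite-dimensional pieces on which the diameters blow up like $4^n$, tested against a carefully weighted functional $\phi_0$, via Corollary \ref{prop unboundedness in a sequences of mutually orthognal finite dimensional subsets} and Theorem \ref{t boundedness of diamzF in the orthogonal complement of a finite rank projection}). Relative to that, the present theorem is essentially a one-line synthesis. The only thing to be careful about in writing it out is to cite $\Delta^\sharp=\Delta$ explicitly so that the skew-symmetry of $z_0$ is available, ensuring the $^*$-derivation conclusion rather than merely the derivation conclusion.
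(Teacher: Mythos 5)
Your proposal is correct and follows exactly the paper's own route: invoke Theorem \ref{thm boundedness of ztildeF}$(a)$ to get boundedness of $\mathcal{D}\mathrm{iam}(\Delta)$, then apply Corollary \ref{c id principle 3}$(a)$. Your extra remark that the skew-symmetry $z_0=-z_0^*$ upgrades the conclusion from ``linear derivation'' to ``linear $^*$-derivation'' is a useful clarification that the paper leaves implicit.
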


\begin{proof} By Theorem \ref{thm boundedness of ztildeF}, the set $$\mathcal{D}\hbox{iam} (\Delta)=\left\{ \hbox{diam} (\sigma(w_{_F})) : w_F\in[z_{_F}], \ F\in \mathfrak{F} (H) \right\}$$ is bounded. The desired conclusion follows from Corollary \ref{c id principle 3}.
\end{proof}

All the results from Lemma \ref{l zF} to Proposition \ref{prop unboundedness in a sequences of mutually orthognal finite dimensional subsets} remain valid when $\Delta : K(H) \to K(H)$ is a weak-2-local derivation with $\Delta^{\sharp} = \Delta$. Actually, the conclusion of Theorem \ref{thm boundedness of ztildeF} also holds for every such a mapping $\Delta$ with practically the same proof, but replacing $a_0=\displaystyle \sum_{n=1}^{\infty} e_{n}\in B(H)$ with $a_0 = \displaystyle \sum_{n=1}^{\infty} \left(\frac{2}{3}\right)^n e_{n}\in K(H)$, because in that case we would have $$ \|\Delta(a_0)\| \geq \left| \phi_0 (s_{2n} \Delta (a_0) s_{2n}) \right| = \sum_{k=1}^{n} \frac{1}{2^{k}} \left(\frac{2}{3}\right)^k |\lambda_{2k-1} - \lambda_{2k}|> \sum_{k=1}^{n}  \left(\frac{4}{3}\right)^k,$$ obtaining the desired contradiction. We have thus obtained an appropriate version of Theorem \ref{thm boundedness of ztildeF} for weak-2-local derivations on $K(H)$.

\begin{theorem}\label{thm boundedness of ztildeF compact} Let $\Delta : K(H) \to K(H)$ be a weak-2-local derivation with $\Delta^{\sharp} = \Delta$. Then the following statements hold:
\begin{enumerate}[$(a)$] \item The set $\mathcal{D}\hbox{iam} (\Delta)=\left\{ \hbox{diam} (\sigma(w_{_F})) : w_F\in[z_{_F}], \ F\in \mathfrak{F} (H) \right\}$ is bounded;
\item The set $\left\{ \|\widetilde{z}_{_F}\| : \ F\in \mathfrak{F} (H) \right\}$ is bounded;
\end{enumerate}
Consequently, by Alaoglu's theorem, we can find $z_0\in B(H)$ with $z_0=-z_0^*$ and a subnet $(\widetilde{z}_{_F})_{F\in \Lambda}$ of $(\widetilde{z}_{_F})_{F\in \mathfrak{F} (H)}$ converging to $z_0$ in the weak$^*$-topology of $B(H)$. $\hfill\Box$
\end{theorem}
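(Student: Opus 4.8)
The plan is to transcribe the proof of Theorem~\ref{thm boundedness of ztildeF} almost verbatim, altering only the single place where membership in $K(H)$ matters. All the preparatory machinery --- Lemma~\ref{l zF}, Remark~\ref{remark [zF]}, Lemma~\ref{l F1 in F2}, and Corollary~\ref{prop unboundedness in a sequences of mutually orthognal finite dimensional subsets} --- carries over unchanged to a weak-2-local derivation $\Delta:K(H)\to K(H)$ with $\Delta^\sharp=\Delta$, since those statements only concern the compressions $p_{_F}\Delta p_{_F}|_{p_{_F}B(H)p_{_F}}$ and finite-rank elements, all of which live in $K(H)$ regardless. Thus for~$(a)$ I would argue by contradiction: assuming $\mathcal{D}\hbox{iam}(\Delta)$ is unbounded, Corollary~\ref{prop unboundedness in a sequences of mutually orthognal finite dimensional subsets} produces a sequence $(F_n)\subset\mathfrak{F}(H)$ with $p_{_{F_n}}\perp p_{_{F_m}}$ for $n\neq m$ and $\hbox{diam}(\sigma(\widetilde{z}_{_{F_n}}))\geq 4^n$.

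From here I would reproduce exactly the geometric setup of the $B(H)$ proof: choose mutually orthogonal rank-one projections $(p_k)$ with $p_{2n-1},p_{2n}\leq p_{_{F_n}}$ realizing the diameter, so that $\widetilde{z}_{_{F_n}}=i\lambda_{2n-1}p_{2n-1}+i\lambda_{2n}p_{2n}+\cdots$ with $\lambda_{2n-1},\lambda_{2n}\in\mathbb{R}$ and $\lambda_{2n-1}-\lambda_{2n}=\hbox{diam}(\sigma(\widetilde{z}_{_{F_n}}))\geq 4^n$, together with the rank-two partial isometries $e_n=\xi_{2n}\otimes\xi_{2n-1}+\xi_{2n-1}\otimes\xi_{2n}$. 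The only change --- and the whole point of the compact case --- is to damp the series: I would set $a_0=\sum_{n=1}^\infty (2/3)^n e_n$ in place of $\sum_{n=1}^\infty e_n$. Because the $e_n$ are mutually orthogonal partial isometries and the coefficients $(2/3)^n$ tend to $0$, the partial sums are finite-rank and converge in operator norm, so $a_0$ is a genuine element of $K(H)$; this is exactly what fails for the undamped series used in $B(H)$.

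With $s_{2n}=\sum_{k=1}^{2n}p_k$ and the norm-one functional $\phi_0=\sum_{k=1}^n 2^{-k}\,\omega_{\xi_{2k-1},\xi_{2k}}$, the same commutator identity $s_{2n}\Delta(a_0)s_{2n}=[\,i\sum_{k=1}^n\lambda_{2k-1}p_{2k-1}+\lambda_{2k}p_{2k},\,s_{2n}a_0 s_{2n}]$ (justified by Lemma~\ref{l zF}, Remark~\ref{remark [zF]}, Lemma~\ref{l F1 in F2} and \cite[Lemma~3.2]{NiPe2015}) now yields
$$\|\Delta(a_0)\|\ \geq\ |\phi_0(s_{2n}\Delta(a_0)s_{2n})|=\sum_{k=1}^n \frac{1}{2^k}\Big(\frac{2}{3}\Big)^k|\lambda_{2k-1}-\lambda_{2k}|>\sum_{k=1}^n\Big(\frac{4}{3}\Big)^k,$$
which is unbounded in $n$ while $\Delta(a_0)\in K(H)$ has finite norm --- the desired contradiction, proving~$(a)$. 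Statement~$(b)$ then follows word for word from the $B(H)$ argument: for any $z\in[z_{_F}]$ one picks $i\lambda\in\sigma(z)$ and uses $\|\widetilde{z}_{_F}\|\leq\|z-i\lambda p_{_F}\|\leq\hbox{diam}(\sigma(z))=\hbox{diam}(\sigma(\widetilde{z}_{_F}))$, which is bounded by~$(a)$. The final assertion is immediate from Alaoglu's theorem.

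The only genuine issue to watch --- and it is the entire content of the modification --- is the arithmetic balancing the three competing exponential factors: the damping $(2/3)^k$ chosen to force $a_0\in K(H)$ must be slow enough that, after multiplication by the functional weights $2^{-k}$ and the diameter growth $4^k$ guaranteed by Corollary~\ref{prop unboundedness in a sequences of mutually orthognal finite dimensional subsets}, the lower bound still diverges, here as $\sum(4/3)^k$. Any damping securing compactness but killing the divergence would break the argument; the choice $(2/3)^k$ is precisely tuned so that $2^{-k}(2/3)^k 4^k=(4/3)^k\to\infty$.
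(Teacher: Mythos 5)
Your proposal is correct and follows essentially the same route as the paper, which likewise observes that all the machinery from Lemma~\ref{l zF} through Corollary~\ref{prop unboundedness in a sequences of mutually orthognal finite dimensional subsets} carries over to $K(H)$ and then reruns the proof of Theorem~\ref{thm boundedness of ztildeF} with $a_0=\sum_{n=1}^{\infty}\left(\frac{2}{3}\right)^{n}e_{n}\in K(H)$ in place of $\sum_{n=1}^{\infty}e_{n}$, arriving at the same divergent lower bound $\sum_{k=1}^{n}\left(\frac{4}{3}\right)^{k}$. Your closing remark on tuning the damping factor against the weights $2^{-k}$ and the growth $4^{k}$ is exactly the point of the paper's modification.
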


Applying a subtle adaptation of the previous arguments we get the following.

\begin{theorem}\label{t weak2local derivations on KH which are symmetric are derivations}
Let $\Delta : K(H) \to K(H)$ be a weak-2-local derivation with $\Delta^{\sharp} = \Delta$. Then $\Delta$ is a linear $^*$-derivation.
\end{theorem}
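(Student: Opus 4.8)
The plan is to follow verbatim the route that produced Theorem \ref{t weak2local derivations on BH which are symmetric are derivations}, feeding in the compact boundedness principle in place of its $B(H)$ counterpart. First I would apply Theorem \ref{thm boundedness of ztildeF compact} to secure that the set $\mathcal{D}\hbox{iam} (\Delta)$ is bounded and to fix, via Alaoglu's theorem, a skew symmetric element $z_0\in B(H)$ obtained as the weak$^*$-limit of a subnet $(\widetilde{z}_{_F})_{F\in \Lambda}$ of $(\widetilde{z}_{_F})_{F\in \mathfrak{F} (H)}$. As recorded in the paragraph preceding Theorem \ref{thm boundedness of ztildeF compact}, every statement from Lemma \ref{l zF} through Corollary \ref{prop unboundedness in a sequences of mutually orthognal finite dimensional subsets} transfers to a weak-2-local derivation $\Delta : K(H)\to K(H)$ with $\Delta^\sharp =\Delta$; in particular the $K(H)$-version of Proposition \ref{prop Id Princ 1}, applied with $G=\{0\}$, already yields $\Delta (p) = [z_0,p]$ for every finite-rank projection $p$.

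The core of the argument is then the $K(H)$-analogue of the identity principle Theorem \ref{t id princ} in the case $p_0=0$. By Lemma \ref{lema NIPe finite ranks}, which applies unchanged in the compact setting, $\Delta|_{\mathcal{F} (H)}$ is linear, and since every element of $\mathcal{F} (H)$ is a finite linear combination of finite-rank projections, $\Delta$ and the $^*$-derivation $D=[z_0,.]$ agree on $\mathcal{F} (H)$. To upgrade this to all of $K(H)$ I would fix $a\in K(H)$ and a finite-rank projection $p_1$; since $p_1 a p_1 + p_1 a p_1^\perp + p_1^\perp a p_1\in \mathcal{F} (H)$, the locality furnished by \cite[Lemma 3.2]{NiPe2015} together with the agreement on $\mathcal{F} (H)$ gives
$$p_1 \Delta (a) p_1 = p_1 \Delta (p_1 a p_1 + p_1 a p_1^\perp + p_1^\perp a p_1) p_1 = p_1 [z_0, p_1 a + p_1^\perp a p_1] p_1.$$
Letting $p_1$ increase through the finite-rank projections, so that $p_1\nearrow 1$ in the strong$^*$ topology of $B(H)$, the joint strong$^*$-continuity of multiplication on bounded sets \cite[Proposition 1.8.12]{S} forces the left-hand side to converge to $\Delta (a)$ and the right-hand side to $[z_0,a]$. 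Hence $\Delta (a) = [z_0,a]$ for every $a\in K(H)$, and since $K(H)$ is an ideal of $B(H)$ the map $[z_0,.]$ carries $K(H)$ into $K(H)$; thus $\Delta$ is a linear $^*$-derivation.

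The step where the adaptation is genuinely \emph{subtle}, and which I expect to be the main obstacle, is the passage to the limit in the non-unital algebra $K(H)$: neither the implementing element $z_0$ nor the approximating net of projections $(p_1)$ belongs to $K(H)$, both living only in the multiplier algebra $B(H)$. The limiting computation must therefore be performed in the strong$^*$ (equivalently weak$^*$) topology of $B(H)$, and its validity rests on the fact that for each fixed compact $a$ the operators $\Delta (a)$ and $[z_0,a]$ are honest elements of $K(H)\subseteq B(H)$; multiplying them on both sides by the uniformly bounded net $(p_1)$ and passing to the limit is then legitimate. Once this single point is handled, the remaining verifications are a routine transcription of the $B(H)$ arguments, and the conclusion is exactly as in Theorem \ref{t weak2local derivations on BH which are symmetric are derivations}.
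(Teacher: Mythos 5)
Your proposal is correct and follows exactly the route the paper intends: Theorem \ref{thm boundedness of ztildeF compact} combined with the $K(H)$-versions of Proposition \ref{prop Id Princ 1} (with $G=\{0\}$) and Theorem \ref{t id princ} (with $p_0=0$), i.e. Corollary \ref{c id principle 3}, which is precisely the ``subtle adaptation of the previous arguments'' the paper invokes without writing out. Your explicit handling of the limiting step --- performing the strong$^*$ limit in $B(H)$ because $z_0$ and the approximating net of finite-rank projections live only in the multiplier algebra, while $\Delta(a)$ and $[z_0,a]$ remain in $K(H)$ --- correctly fills in the one point the paper leaves implicit.
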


\section{weak-2-local derivations on $B(H)$}

We can culminate now the study of weak-2-local derivations on $B(H)$ with the promised solution to Problem \ref{problem 1 new} in the case $A=B(H)$.

\begin{theorem}\label{t final} Let $H$ be an arbitrary complex Hilbert space, and let $\Delta$ be a weak-2-local derivation on $B(H)$. Then $\Delta$ is a linear derivation.
\end{theorem}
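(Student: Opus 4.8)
The plan is to reduce the general (not necessarily symmetric) case to the symmetric case already treated in Theorem \ref{t weak2local derivations on BH which are symmetric are derivations}. Given an arbitrary weak-2-local derivation $\Delta$ on $B(H)$, I would first invoke the decomposition announced in the introduction: writing $\Delta_1 = \frac{\Delta + \Delta^{\sharp}}{2}$ and $\Delta_2 = \frac{\Delta - \Delta^{\sharp}}{2i}$, one has $\Delta = \Delta_1 + i\,\Delta_2$. The key preliminary point is that each $\Delta_j$ is again a weak-2-local derivation which is moreover symmetric, i.e. $\Delta_j^{\sharp} = \Delta_j$. That $\Delta^{\sharp}$ is a weak-2-local derivation whenever $\Delta$ is follows from the fact that the set of derivations on $B(H)$ is a $\sharp$-invariant real-linear subspace of $L(B(H),B(H))$ (if $D$ is a derivation then so is $D^{\sharp}$), together with Lemma \ref{l new basic properties 1}$(e)$; and then the real-linear combinations $\Delta_1,\Delta_2$ are weak-2-local derivations by Lemma \ref{l new basic properties 1}$(c)$. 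A direct check shows $\Delta_1^{\sharp}=\Delta_1$ and $\Delta_2^{\sharp}=\Delta_2$.

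Once this decomposition is in place, the main theorem follows almost immediately. I would apply Theorem \ref{t weak2local derivations on BH which are symmetric are derivations} separately to $\Delta_1$ and to $\Delta_2$: since each is a weak-2-local derivation with $\Delta_j^{\sharp}=\Delta_j$, each is a linear $^*$-derivation on $B(H)$. Therefore $\Delta = \Delta_1 + i\,\Delta_2$ is a linear combination of two linear derivations, hence itself a linear derivation on $B(H)$. (It will in general \emph{not} be a $^*$-derivation, which is exactly why the full statement of Problem \ref{problem 1 new} is about plain derivations rather than $^*$-derivations, and why the symmetric reduction is the natural route.)

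The only genuinely substantive verification in this argument is the claim that $\Delta^{\sharp}$ is a weak-2-local derivation and that the two symmetric pieces $\Delta_1,\Delta_2$ inherit this property; everything after that is formal. This is the step the introduction flagged when it promised to ``justify later'' that $\Delta = \Delta_1 + \Delta_2$ with $\Delta_1,\Delta_2$ weak-2-local derivations and symmetric maps, so I expect the proof to record this justification explicitly (or to cite it from \cite{CaPe2015}) and then conclude. The main obstacle, such as it is, lies entirely upstream in Theorem \ref{t weak2local derivations on BH which are symmetric are derivations}, whose proof rests on the boundedness principle of Theorem \ref{thm boundedness of ztildeF} and the identity principle of Theorem \ref{t id princ}; relative to that machinery, the passage from the symmetric case to the general case is a short reduction.
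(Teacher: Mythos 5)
Your proof is correct and is essentially the same as the paper's: decompose $\Delta = \Delta_1 + i\Delta_2$ with $\Delta_1=\frac{\Delta+\Delta^{\sharp}}{2}$, $\Delta_2=\frac{\Delta-\Delta^{\sharp}}{2i}$, use Lemma \ref{l new basic properties 1}$(c)$ and $(e)$ to see that both are symmetric weak-2-local derivations, and apply the symmetric case. (In fact your citation of Theorem \ref{t weak2local derivations on BH which are symmetric are derivations} at the final step is the right one; the paper's proof cites Theorem \ref{t id princ} there, which appears to be a slip.)
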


\begin{proof} We have already commented that $H$ can be assumed to infinite dimensional. Suppose $\Delta : B(H)\to B(H)$ is a weak-2-local derivation. Since the set $\mathcal{S} = \hbox{Der}(A)$, of all derivations on $B(H)$, is a linear subspace of $B(B(H))$, we deduce from Lemma \ref{l new basic properties 1}$(c)$ and $(e)$ that $\Delta_1 = \frac{\Delta + \Delta^{\sharp}}{2}$ and $\Delta_2 = \frac{\Delta - \Delta^{\sharp}}{2 i}$ are weak-2-local derivations on $B(H)$. Since $\Delta_1 = \Delta_1^{\sharp}$ and $\Delta_2 = \Delta_2^{\sharp}$, Theorem \ref{t id princ} proves that $\Delta_1$ and $\Delta_2$ are linear $^*$-derivations on $B(H)$, and thus, $\Delta = \Delta_1 + i  \Delta_2$ is a linear derivation on $B(H)$.
\end{proof}

According to Theorem \ref{t weak2local derivations on KH which are symmetric are derivations}, the arguments developed to prove Theorem \ref{t final} are also valid to obtain the following:

\begin{theorem}\label{t final KH} Let $H$ be an arbitrary complex Hilbert space, and let $\Delta$ be a weak-2-local derivation on $K(H)$. Then $\Delta$ is a linear derivation.$\hfill\Box$
\end{theorem}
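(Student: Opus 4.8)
The plan is to reduce the general weak-2-local derivation $\Delta$ on $K(H)$ to the symmetric case already handled in Theorem \ref{t weak2local derivations on KH which are symmetric are derivations}, mirroring exactly the decomposition argument used for $B(H)$ in Theorem \ref{t final}. The key observation is that the set $\mathcal{S}=\hbox{Der}(K(H))$ of all derivations on $K(H)$ is a (complex, hence real) linear subspace of $B(K(H))$, so the abstract machinery of Lemma \ref{l new basic properties 1} applies with $\mathcal{S}$ in place of the symmetric maps.

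First I would record that $\mathcal{S}=\hbox{Der}(K(H))$ satisfies $\mathcal{S}^{\sharp}=\mathcal{S}$: if $D$ is a derivation then $D^{\sharp}(x)=D(x^*)^*$ is again a derivation, since for $a,b\in K(H)$ one checks $D^{\sharp}(ab)=D((ab)^*)^*=D(b^*a^*)^*=(D(b^*)a^*+b^*D(a^*))^*=aD^{\sharp}(b)+D^{\sharp}(a)b$. With this in hand, Lemma \ref{l new basic properties 1}$(c)$ and $(e)$ guarantee that the two maps $\Delta_1=\frac{\Delta+\Delta^{\sharp}}{2}$ and $\Delta_2=\frac{\Delta-\Delta^{\sharp}}{2i}$ are again weak-2-local derivations on $K(H)$. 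By construction these are symmetric: a direct computation gives $\Delta_1^{\sharp}=\Delta_1$ and $\Delta_2^{\sharp}=\Delta_2$, using $\Delta^{\sharp\sharp}=\Delta$ and the fact that taking $\sharp$ is conjugate-linear.

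Next I would invoke Theorem \ref{t weak2local derivations on KH which are symmetric are derivations}, which asserts that every weak-2-local derivation on $K(H)$ that equals its own $\sharp$ is automatically a linear $^*$-derivation. Applying this to both $\Delta_1$ and $\Delta_2$ shows each is a linear derivation on $K(H)$. Finally, reassembling via $\Delta=\Delta_1+i\Delta_2$ and using that a complex-linear combination of linear derivations is a linear derivation, I conclude that $\Delta$ itself is a linear derivation, completing the proof.

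There is essentially no serious obstacle here, since all the analytic difficulty has already been absorbed into Theorem \ref{t weak2local derivations on KH which are symmetric are derivations} (and behind it the boundedness and identity principles, which the excerpt notes transfer to $K(H)$ with only the minor substitution of $a_0=\sum_n (2/3)^n e_n$ for $a_0=\sum_n e_n$). The only point requiring the slightest care is verifying that the decomposition $\Delta=\Delta_1+i\Delta_2$ genuinely recovers $\Delta$ and that each piece is symmetric; this is the same bookkeeping already justified in the discussion following Problem \ref{problem 2 new} and in the proof of Theorem \ref{t final}, so in practice the statement follows, as the excerpt indicates, by repeating the argument of Theorem \ref{t final} verbatim with $B(H)$ replaced by $K(H)$.
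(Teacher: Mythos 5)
Your proposal is correct and follows exactly the route the paper takes: it obtains Theorem \ref{t final KH} by repeating the proof of Theorem \ref{t final}, decomposing $\Delta=\Delta_1+i\Delta_2$ into symmetric weak-2-local derivations via Lemma \ref{l new basic properties 1}$(c)$ and $(e)$ and then invoking Theorem \ref{t weak2local derivations on KH which are symmetric are derivations}. Your explicit check that $\hbox{Der}(K(H))^{\sharp}=\hbox{Der}(K(H))$ is a welcome detail the paper leaves implicit.
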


We begin with a suitable generalization of \cite[Lemma 3.2]{NiPe2015}.

\begin{lemma}\label{l orthogonal sums} Let $A_1$ and $A_2$ be C$^*$-algebras, and let $\Delta : A_1 \oplus^{\infty} A_2 \to A_1 \oplus^{\infty} A_2$ be a weak-2-local derivation.
Then $\Delta (A_j)\subseteq A_j$ for every $j=1,2$. Moreover, if $\pi_j $ denotes the projection of $ A_1 \oplus^{\infty} A_2$ onto $A_j$, we have $\pi_j \Delta (a_1+a_2) = \pi_j \Delta (a_j)$, for every $a_1\in A_1$, $a_2\in A_2$ and $j=1,2$.
\end{lemma}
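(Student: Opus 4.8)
The plan is to decompose an arbitrary element of $A_1 \oplus^{\infty} A_2$ and exploit the weak-2-local structure against carefully chosen functionals, using the fact that genuine derivations on a direct sum respect the summands. The underlying principle is that any linear derivation $D$ on $A_1 \oplus^{\infty} A_2$ satisfies $D(A_j) \subseteq A_j$: indeed, writing $\mathbf{1}_j$ for the (possibly only multiplier) unit of each block and noting that the central projections separating $A_1$ from $A_2$ must be sent to $0$ by any derivation, one checks $\pi_2 D(a_1) = 0$ for $a_1 \in A_1$ and symmetrically. This is the single structural fact about the members of $\mathcal{S} = \hbox{Der}(A_1 \oplus^{\infty} A_2)$ that I would isolate first, since the weak-2-local hypothesis only ever produces genuine derivations locally.

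First I would prove $\Delta(A_j) \subseteq A_j$. Fix $a_1 \in A_1$; I want to show $\pi_2 \Delta(a_1) = 0$, equivalently $\psi \Delta(a_1) = 0$ for every $\psi \in (A_1 \oplus^{\infty} A_2)^*$ supported on $A_2$ (i.e.\ $\psi = \psi \circ \pi_2$). Applying the weak-2-local definition to the pair $(a_1, a_1)$ and the functional $\psi$, there is a derivation $T = T_{a_1, a_1, \psi} \in \mathcal{S}$ with $\psi \Delta(a_1) = \psi T(a_1)$. By the structural fact $T(a_1) \in A_1$, so $\pi_2 T(a_1) = 0$, and since $\psi = \psi \circ \pi_2$ we get $\psi T(a_1) = \psi(\pi_2 T(a_1)) = 0$. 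Hence $\psi \Delta(a_1) = 0$ for all such $\psi$, which forces $\pi_2 \Delta(a_1) = 0$, i.e.\ $\Delta(a_1) \in A_1$. The same argument with the roles of $1$ and $2$ swapped gives $\Delta(A_2) \subseteq A_1^{\perp} = A_2$, completing this half.

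Next I would establish the stronger statement $\pi_j \Delta(a_1 + a_2) = \pi_j \Delta(a_j)$. Fix $a_1 \in A_1$, $a_2 \in A_2$, set $a = a_1 + a_2$, and take an arbitrary $\phi \in (A_1 \oplus^{\infty} A_2)^*$. The natural move is to feed the \emph{pair} $(a, a_j)$ into the weak-2-local hypothesis against $\phi$: this yields a single derivation $T = T_{a, a_j, \phi}$ with $\phi \Delta(a) = \phi T(a)$ and $\phi \Delta(a_j) = \phi T(a_j)$ simultaneously. Since $T$ is a genuine derivation it is linear and respects the summands, so $\pi_j T(a) = \pi_j T(a_1 + a_2) = \pi_j T(a_1) + \pi_j T(a_2) = \pi_j T(a_j)$, the cross term vanishing because $T(a_k) \in A_k$ for $k \ne j$. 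Composing $\phi$ with $\pi_j$ (using that $\pi_j^* \phi$ is again a legitimate functional) and running the argument with that functional, one extracts $\pi_j \Delta(a) = \pi_j \Delta(a_j)$ from the coincidence of the two evaluations through the common $T$.

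The main obstacle is the bookkeeping in the last paragraph: because $T$ depends on the triple $(a, a_j, \phi)$, I must be careful that the functional I test against and the pair of points I insert are chosen so that the \emph{same} $T$ controls both $\Delta(a)$ and $\Delta(a_j)$, and that the summand-splitting of $T$ transfers cleanly to $\Delta$ after applying $\pi_j$. The clean way to organize this is to fix $j$, apply Lemma \ref{l new basic properties 1}$(d)$ to reduce to self-adjoint functionals if needed, and then for each $\phi$ use the pair $(a_1 + a_2, a_j)$ so that the two required identities share the derivation $T$; the identity $\pi_j T(a_1 + a_2) = \pi_j T(a_j)$ is then pure linearity of $T$ plus $T(A_k) \subseteq A_k$, and since this holds for every $\phi$ the corresponding components of $\Delta$ must agree. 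No convergence or boundedness machinery is needed here — this lemma is purely algebraic and is the direct-sum analogue of the cut-down identities from \cite[Lemma 3.2]{NiPe2015}.
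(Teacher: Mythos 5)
Your proposal follows essentially the same route as the paper: both halves reduce to the single structural fact that every genuine derivation $D$ on $A_1\oplus^{\infty}A_2$ satisfies $D(A_j)\subseteq A_j$, and then test $\Delta$ against functionals supported on one summand, using the pair $(a_1+a_2,a_j)$ for the second claim exactly as the paper does. The one place where your write-up is weaker than the paper's is the justification of that structural fact. Your argument via ``the central projections separating $A_1$ from $A_2$ must be sent to $0$'' only makes literal sense when those projections belong to the algebra, i.e.\ when the $A_j$ are unital; for general C$^*$-algebras they live only in the multiplier algebra, and $D$ is a priori defined only on $A_1\oplus^{\infty}A_2$. You hedge with ``(possibly only multiplier) unit,'' but to close the gap you would need to invoke the (standard, but not free) extension of a derivation to the multiplier algebra. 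The paper avoids this entirely by a Cohen-factorization argument: writing $a_1=b_1c_1$ with $b_1,c_1\in A_1$ and using the Leibniz rule, both $D(b_1)c_1$ and $b_1D(c_1)$ lie in the closed ideal $A_1$, so $D(a_1)\in A_1$ with no unitality assumption. Either repair works; with that point fixed, your proof is correct and coincides with the paper's.
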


\begin{proof} Let us fix $a_1\in A_1$. Every C$^*$-algebra admits a bounded approximate unit (cf.
\cite[Theorem 1.4.2]{Ped}), thus, by Cohen's factorisation theorem (cf. \cite[Theorem VIII.32.22 and Corollary VIII.32.26]{HewRoss}), there exist $b_1,c_1\in A_1$ satisfying $a_1 = b_1 c_1$. We recall that $A^{*} = A_1^* \oplus^{\ell_1} A_2^*$. By hypothesis, for each $\phi\in A_2^{*}$, there exists a derivation $D_{a_1,\phi}: A_1 \oplus^{\infty} A_2 \to A_1 \oplus^{\infty} A_2$ satisfying $$\phi \Delta_{a_1,\phi} (a_1) = \phi D_{a_1,\phi}(a_1) = \phi  D_{a_1,\phi}(b_1 c_1) = \phi ( D_{a_1,\phi}(b_1 ) c_1) + \phi  (b_1 D_{a_1,\phi}(c_1)) = 0,$$ where in the last equalities we applied that $D_{a_1,\phi}(b_1 ) c_1$ and  $b_1 D_{a_1,\phi}(c_1)$ both lie in $A_1$ and $\phi\in A_2^*$. We deduce, via Hahn-Banach theorem, that $\Delta(a_1) \in A_1$.\smallskip

The above arguments also show that, for each derivation $D: A_1 \oplus^{\infty} A_2 \to A_1 \oplus^{\infty} A_2$ we have $D (A_j)\subseteq A_j$ for every $j=1,2$. It follows from the hypothesis that, for each  $\phi\in A_1^{*}$, $a_1\in A_1$ and $a_2\in A_2$, there exists a derivation $D_{\phi,a_1+a_2, a_1}: A_1 \oplus^{\infty} A_2 \to A_1 \oplus^{\infty} A_2$ satisfying $$\phi \Delta (a_1) = \phi D_{\phi,a_1+a_2, a_1}(a_1), \hbox{ and } \phi \Delta (a_1+a_2) = \phi D_{\phi,a_1+a_2, a_1}(a_1+a_2).$$ In particular, $\phi \Delta (a_1) = \phi \Delta (a_1+a_2)$, for every $\phi\in A_1^*$. It follows that $\pi_1 \Delta (a_1) = \pi_1 \Delta (a_1+a_2)$.
\end{proof}

For further purposes, we shall also explore the stability of the above results under $\ell_{\infty}$- and $c_0$-sums.

\begin{proposition}\label{p ellinftysums} Let $(A_{j})$ be an arbitrary family of C$^*$-algebras. Suppose that for each $j$, every weak-2-local derivation on $A_j$ is a linear derivation. Then the following statements hold:
\begin{enumerate}[$(a)$] \item Every weak-2-local derivation on $A= \bigoplus^{\ell_{\infty}} A_j$ is a linear derivation;
\item Every weak-2-local derivation on $A= \bigoplus^{c_0} A_j$ is a linear derivation.
\end{enumerate}
\end{proposition}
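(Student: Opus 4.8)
The plan is to reduce the general statement, for both the $\ell_{\infty}$- and the $c_0$-sum, to the two-summand situation already settled in Lemma \ref{l orthogonal sums}. First I would fix an index $j$ and peel off the $j$-th coordinate, writing $A = A_j \oplus^{\infty} A_{\widehat{j}}$, where $A_{\widehat{j}}$ denotes $\bigoplus^{\ell_{\infty}}_{k\neq j} A_k$ in case $(a)$ and $\bigoplus^{c_0}_{k\neq j} A_k$ in case $(b)$. In either case the supremum norm shows that $(A_j, A_{\widehat{j}})$ is a genuine $\oplus^{\infty}$-decomposition of $A$ into two C$^*$-algebras, so Lemma \ref{l orthogonal sums} applies to it verbatim. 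This device is exactly what lets me avoid any discussion of the dual of an infinite $\ell_{\infty}$-sum: the only duals I ever invoke are those of two-term sums, where $(A_j \oplus^{\infty} A_{\widehat{j}})^* = A_j^* \oplus^{\ell_1} A_{\widehat{j}}^*$.

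From Lemma \ref{l orthogonal sums} applied to this splitting I obtain, for every $j$, the invariance $\Delta(A_j)\subseteq A_j$ together with the localization identity $\pi_j \Delta(a) = \pi_j \Delta(\pi_j a)$ for all $a\in A$; the same lemma records that any derivation $D$ of $A$ satisfies $D(A_j)\subseteq A_j$. Using the invariance I would then set $\Delta_j := \Delta|_{A_j}\colon A_j\to A_j$ and check that it is a weak-2-local derivation on $A_j$. Indeed, given $x,y\in A_j$ and $\psi\in A_j^*$, I extend $\psi$ to $\widetilde{\psi}=\psi\circ\pi_j\in A^*$ and apply the weak-2-local property of $\Delta$ to produce a derivation $D$ of $A$ with $\widetilde{\psi}\Delta(x)=\widetilde{\psi}D(x)$ and $\widetilde{\psi}\Delta(y)=\widetilde{\psi}D(y)$. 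Since $\widetilde{\psi}$ only sees the $j$-th coordinate and both $\Delta$ and $D$ leave $A_j$ invariant, restricting $D$ to the subalgebra $A_j$ yields a derivation $D_j := D|_{A_j}$ of $A_j$ with $\psi \Delta_j(x)=\psi D_j(x)$ and $\psi\Delta_j(y) = \psi D_j(y)$. By the hypothesis of the proposition, each $\Delta_j$ is therefore a linear derivation on $A_j$.

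It then remains to assemble the maps $\Delta_j$ into $\Delta$. The invariance and the localization identity combine to give $\pi_j\Delta(a)=\Delta_j(\pi_j a)$ for every $a\in A$ and every $j$. Since each projection $\pi_j$ is a $^*$-homomorphism and each $\Delta_j$ is linear and satisfies the Leibniz rule, I would verify coordinatewise that $\pi_j\Delta(\lambda a + b) = \pi_j(\lambda\Delta(a)+\Delta(b))$ and $\pi_j \Delta(ab)=\pi_j(\Delta(a)b + a\Delta(b))$ for all $a,b\in A$, $\lambda\in\mathbb{C}$ and all $j$. Because an element of the $\ell_{\infty}$-sum (respectively the $c_0$-sum) is uniquely determined by its coordinates $(\pi_j x)_j$, these coordinatewise equalities upgrade to the corresponding identities in $A$, so $\Delta$ is a linear derivation; the same computation disposes of $(a)$ and $(b)$ at once. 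The only genuinely delicate point is the reduction in the first paragraph, namely recognizing that detaching a single coordinate of a $c_0$- or $\ell_{\infty}$-sum always leaves an $\oplus^{\infty}$-decomposition into two C$^*$-algebras; once this is in place, everything else is bookkeeping driven by Lemma \ref{l orthogonal sums} and the hypothesis, and the only care needed is to ensure that the witnessing derivations restrict correctly to the coordinate algebras.
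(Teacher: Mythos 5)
Your proposal is correct, and its first half coincides with the paper's: both peel off a single coordinate, view $A$ as the two-term sum $A_j \oplus^{\infty} A_{\widehat{j}}$, and invoke Lemma \ref{l orthogonal sums} to get the invariance $\Delta(A_j)\subseteq A_j$ and the localization identity $\pi_j\Delta(a)=\Delta_j(\pi_j a)$; you are in fact more explicit than the paper in checking that $\Delta|_{A_j}$ is a weak-2-local derivation on $A_j$ (by extending functionals through $\pi_j$ and restricting the witnessing derivations, which is legitimate since every derivation of $A$ leaves $A_j$ invariant). The two arguments diverge in the final step. The paper finishes by proving that the family $\{\|\Delta|_{A_j}\|: j\in J\}$ is uniformly bounded, arguing by contradiction with a test element $a_0=\sum_n a_{j_n}$ (weighted by $2^{-n}$ in the $c_0$ case) and the localization identity; the uniform bound is what lets one recognize $a\mapsto(\Delta_j(\pi_j a))_j$ as a bounded derivation equal to $\Delta$. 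You instead verify additivity and the Leibniz rule coordinate by coordinate directly from $\pi_j\Delta(a)=\Delta_j(\pi_j a)$ and the multiplicativity of $\pi_j$, then use that elements of the $\ell_{\infty}$- or $c_0$-sum are determined by their coordinates. This is shorter, handles $(a)$ and $(b)$ in one stroke (no choice of weights is needed), and recovers boundedness a posteriori from the automatic continuity of derivations on C$^*$-algebras, a fact the paper already records. The trade-off is that the paper's route produces an explicit uniform bound and makes the containment of the image in the $c_0$-sum visibly quantitative, whereas yours leans on the fact that $\Delta(a)$ already lies in $A$ by hypothesis, so no summability issue can arise. Both are sound.
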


\begin{proof} $(a)$ Let $\Delta : \bigoplus^{\ell_{\infty}}_{j\in J} A_j \to \bigoplus^{\ell_{\infty}}_{j\in J} A_j$ be a weak-2-local derivation. Let $\pi_j$ denote the natural projection of $A$ onto $A_j$. If we fix an index $j_0\in J$, it follows from Lemma \ref{l orthogonal sums} that $\Delta (A_{j_0}) \subseteq A_{j_0}$ and $\displaystyle \Delta \left( \bigoplus^{\ell_{\infty}}_{j_0\neq j\in J} A_j \right) \subseteq \bigoplus^{\ell_{\infty}}_{j_0\neq j\in J} A_j$. We deduce from the assumptions that $\Delta|_{A_{j}} : A_{j} \to A_{j}$ is a linear derivation for every $j$.\smallskip

We shall finish the proof by showing that $\{ \|\Delta|_{A_{j}}\| : j\in J\}$ is a bounded set. Otherwise, there exist infinite sequences $(j_n)\subseteq J,$ $(a_{j_n})\subset A,$ with $a_{j_n}\in A_{j_n},$ $\|a_{j_n}\|\leq 1$, and $\|\Delta (a_{j_n})\| > 4^{n}$, for every natural $n$. Let $\displaystyle a_0 = \sum_{n=1}^{\infty} a_{j_n}\in A$. For each natural $n$, $a_0 = a_{j_n} + (a_0-a_{j_n})$ with $a_{j_n} \perp (a_0-a_{j_n})$ in $A$. It follows from the above properties and the second statement in Lemma \ref{l orthogonal sums} that $$ \| \Delta (a_0) \| \geq \|\pi_{j_n} \Delta (a_0) \|= \| \Delta (a_{j_n}) \| > 4^n,$$ for every $n\in \mathbb{N}$, which is impossible.\smallskip

$(b)$ The proof of $(a)$ but replacing $\displaystyle a_0 = \sum_{n=1}^{\infty} a_{j_n}$ with $\displaystyle a_0 = \sum_{n=1}^{\infty} \frac{1}{2^n} a_{j_n}\in A$ remains valid in this case.
\end{proof}

Following standard notation, we shall say that a von Neumann algebra $M$ is atomic if $M = \bigoplus^{\ell_{\infty}} B(H_{\alpha})$, where each $H_{\alpha}$ is a complex Hilbert space.
We recall that a Banach algebra is called \emph{dual} or \emph{compact} if, for every $a \in A$, the operator $A\to A$, $b \mapsto aba$ is
compact. By \cite{Al}, compact C$^*$-algebras are precisely the
algebras of the form $(\bigoplus_{i \in I} K(H_i))_{c_0}$, where
each $H_i$ is a complex Hilbert space.\smallskip

We finish this note with a couple of corollaries which follow straightforwardly from Theorems \ref{t final}, \ref{t final KH} and Proposition \ref{p ellinftysums}.

\begin{corollary}\label{t final atomic von Neumann} Every weak-2-local derivation on an atomic von Neumann algebra is a linear derivation.$\hfill\Box$
\end{corollary}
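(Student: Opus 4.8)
The plan is to read off the statement as an immediate consequence of Theorem \ref{t final} and Proposition \ref{p ellinftysums}$(a)$, since the structural decomposition of an atomic von Neumann algebra matches verbatim the hypotheses of the latter. By definition, an atomic von Neumann algebra $M$ has the form $M = \bigoplus^{\ell_{\infty}} B(H_{\alpha})$ for some family $(H_{\alpha})$ of complex Hilbert spaces. So the whole argument reduces to identifying $M$ with an $\ell_{\infty}$-sum whose summands are algebras of the form $B(H_{\alpha})$, and then quoting the appropriate per-summand result.

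First I would invoke Theorem \ref{t final} to record that, for each index $\alpha$, every weak-2-local derivation on the summand $B(H_{\alpha})$ is automatically a linear derivation. This is exactly the per-summand hypothesis demanded by Proposition \ref{p ellinftysums}. Second, with this input in hand, I would apply Proposition \ref{p ellinftysums}$(a)$ with $A_j = B(H_{\alpha})$, which directly gives that every weak-2-local derivation on $M = \bigoplus^{\ell_{\infty}} B(H_{\alpha})$ is a linear derivation. No intermediate construction is required.

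I do not expect any genuine obstacle at this stage, because the entire difficulty has been front-loaded into the two results being cited. In particular, the delicate points---that a weak-2-local derivation on the $\ell_{\infty}$-sum restricts to each block via Lemma \ref{l orthogonal sums}, and that the resulting family of block derivations has uniformly bounded norm so as to assemble into a single bounded linear derivation on all of $M$---are precisely what the proof of Proposition \ref{p ellinftysums}$(a)$ already establishes. Thus the corollary is essentially a one-line deduction; the only thing one must check is that the defining decomposition of an atomic von Neumann algebra is literally an $\ell_{\infty}$-sum of algebras $B(H_{\alpha})$, which holds by definition.
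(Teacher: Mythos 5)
Your proposal is correct and matches the paper's intended argument exactly: the authors state that this corollary follows straightforwardly from Theorem \ref{t final} and Proposition \ref{p ellinftysums}, using the definition of an atomic von Neumann algebra as $\bigoplus^{\ell_{\infty}} B(H_{\alpha})$. Nothing further is needed.
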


\begin{corollary}\label{t final compact Cstar} Every weak-2-local derivation on a compact C$^*$-algebra is a linear derivation.$\hfill\Box$
\end{corollary}

\end{document}